\documentclass[12pt,a4paper]{article}

\usepackage[T1]{fontenc}
\usepackage[utf8]{inputenc}
\usepackage[a4paper,margin=1in]{geometry}
\usepackage{microtype}
\usepackage{parskip}
\usepackage{enumitem}
\usepackage{lmodern}
\usepackage{graphicx}

\usepackage{xcolor}

\usepackage{amsmath,amssymb,amsfonts,amsthm,mathtools}
\usepackage{mathrsfs}
\usepackage{bm}

\usepackage[colorlinks=true,linkcolor=blue,citecolor=blue,urlcolor=blue]{hyperref}

\newtheorem{introtheorem}{Theorem}

\newtheorem{conjecture}{Conjecture}

\newtheorem{introproposition}{Proposition}

\newtheorem{theorem}{Theorem}[section]
\newtheorem{lemma}[theorem]{Lemma}
\newtheorem{proposition}[theorem]{Proposition}

\theoremstyle{definition}

\theoremstyle{remark}

\newcommand{\R}{\mathbb{R}}
\newcommand{\C}{\mathbb{C}}

\newcommand{\Sp}{\mathbb{S}}

\DeclareMathOperator{\tr}{tr}

\newcommand{\abs}[1]{\left|#1\right|}
\newcommand{\norm}[1]{\left\lVert#1\right\rVert}

\newcommand{\angles}[1]{\left\langle#1\right\rangle}

\title{Polarization problems and Coxeter systems}

\author{
Ángel D. Martínez
\and
Oscar Ortega-Moreno
}

\begin{document}

	\maketitle

% Uncomment the next line if you want a table of contents.
% \tableofcontents

	\begin{abstract}
		In this paper we provide a proof of the strong polarization conjecture due to Ball and Frenkel. We observe that Coxeter systems produce unexpected new examples of extremizers. Furthermore, as a by-product of our work we solve the real polarization problem and completely characterize its extremal cases.	
	\end{abstract}

% ============================================================
% Introduction
% ============================================================
\section{Introduction}

	Twenty years ago the following conjecture was raised by Ball and Frenkel:
	\begin{conjecture}[Strong polarization conjecture]\label{strong}
		Given a set of $n$ vectors $v_1,\dots,v_n~\in~\mathbb{S}^{d-1}$, there exists a unit vector $u\in\mathbb{S}^{d-1}$ such that
		$$
		\sum_{j=1}^n\frac{1}{\langle v_j, u\rangle^2}\leq n^2.
		$$
	\end{conjecture}
	Prior to this work, not much was known about this conjecture, with the exception of the planar case proved by Ambrus (cf. \cite{Ambrus, Ambrus2013}). The real polarization problem, which had originated a decade earlier, follows from the strong polarization conjecture. We postpone a detailed discussion of the former and turn first to the extremal cases of Conjecture~\ref{strong}.

	Ball and Frenkel observed that orthogonal bases are not the only extremizers for the strong polarization problem: that evenly distributed points on a great circle give the same bound. This observation suggested that the problem captures geometric structure beyond that exhibited by orthogonal configurations alone. In fact, as we shall explain, there are many other extremal sets, all arising from an unforeseen connection to Coxeter systems.

	For any nonzero vector $v\in \mathbb{R}^d$, let $s_v$ denote the reflection across the orthogonal hyperplane, namely $v^\perp$. A \emph{finite reflection system of unit vectors} in $\mathbb{R}^d$ is a finite subset $\Phi \subset \Sp^{d-1}$ which is invariant under reflections (i.e. such that, for every $v \in \Phi$, one has $s_v(\Phi)=\Phi$).

	\begin{introproposition}\label{strongextremal}
		Let $v_1,\dots,v_n \in \Sp^{d-1}$ be unit vectors, no two of which are parallel, and suppose that
		$$
		\Phi=\{\pm v_1,\dots,\pm v_n\}
		$$
		is a finite reflection system. Then, $v_1,\dots,v_n$ is an extremal configuration of Conjecture $\ref{strong}$.
	\end{introproposition}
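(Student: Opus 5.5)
The plan is to express $F(u)=\sum_{j=1}^n \langle v_j,u\rangle^{-2}$ through the polynomial
$$
P(x)=\prod_{j=1}^n \langle v_j,x\rangle ,\qquad x\in\R^d .
$$
I will show that $F\ge n^2$ at every $u\in\Sp^{d-1}$ with all $\langle v_j,u\rangle\neq 0$, with equality somewhere. Together with Conjecture~\ref{strong}, this makes $v_1,\dots,v_n$ extremal: the minimum of $F$ over the sphere is exactly $n^2$.

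\emph{Step 1 (a Laplacian identity).} Since each $v_j$ is a unit vector, $\Delta \log\abs{\langle v_j,x\rangle} = -\langle v_j,x\rangle^{-2}$ off the hyperplane $v_j^\perp$. Summing over $j$ gives $F(x)=-\Delta\log\abs{P(x)}$ off the hyperplane arrangement. Expanding the Laplacian of the logarithm gives
$$
F=\frac{\abs{\nabla P}^2}{P^2}-\frac{\Delta P}{P}.
$$

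\emph{Step 2 ($P$ is harmonic).} This is the step where the reflection hypothesis enters, and I expect it to be the main obstacle.
\begin{itemize}
\item For each $v\in\Phi$, the reflection $s_v$ permutes $\Phi$, hence permutes the lines $\R v_j$. Therefore $P\circ s_v=\varepsilon_v P$ with $\varepsilon_v=\pm1$.
\item I want $\varepsilon_v=-1$, so that $P$ is alternating: $P\circ w=\det(w)P$ for $w$ in the group $W$ generated by the $s_v$. This is the classical fact about the product of the positive roots of a finite reflection group (Humphreys, or Steinberg). I would cite it, or prove it directly: write $P(s_vx)$, note the factor $\langle v, s_vx\rangle=-\langle v,x\rangle$, and check that the remaining roots pair off under $s_v$ with an even number of sign changes. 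This uses that no two $v_j$ are parallel.
\item Since $\Delta$ commutes with orthogonal maps, $\Delta P$ is again $W$-alternating.
\item An alternating polynomial $Q$ vanishes on each $v_j^\perp$, because $Q=-Q\circ s_{v_j}$ and $s_{v_j}$ fixes $v_j^\perp$ pointwise. Hence $Q$ is divisible by every linear form $\langle v_j,x\rangle$. These forms are pairwise non-proportional, hence coprime, so $Q$ is divisible by $P$.
\item $\Delta P$ is homogeneous of degree $n-2<n$ and divisible by $P$, so $\Delta P=0$.
\end{itemize}
Consequently $F=\abs{\nabla P}^2/P^2$.

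\emph{Step 3 (radial/tangential splitting).} Since $P$ is homogeneous of degree $n$, Euler's identity decomposes the gradient at $u\in\Sp^{d-1}$ as
$$
\nabla P(u)= nP(u)\,u+\nabla_{\Sp}P(u),
$$
where $\nabla_{\Sp}P(u)$ is tangent to the sphere. These two components are orthogonal, so
$$
F(u)=n^2+\frac{\abs{\nabla_{\Sp}P(u)}^2}{P(u)^2}\ \ge\ n^2 .
$$

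\emph{Step 4 (attaining the bound).} Equality holds exactly at critical points of $P|_{\Sp^{d-1}}$ where $P\ne0$. For instance, a point where $\abs{P}$ attains its maximum on the compact sphere works. Hence $\min_u F(u)=n^2$, which is the extremality claimed in Proposition~\ref{strongextremal}.
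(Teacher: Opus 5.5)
Your proposal is correct and follows essentially the same route as the paper. In both, $P$ is harmonic because $P\circ s_{v_k}=-P$ and $\Delta P$ is divisible by the pairwise coprime factors $\langle v_j,x\rangle$; this is fed into the identity $\sum_j \langle v_j,u\rangle^{-2}=\norm{\nabla P(u)}^2/P(u)^2-\Delta P(u)/P(u)$, and your Euler radial/tangential splitting is just the paper's Cauchy--Schwarz step with $\langle u,\nabla P(u)\rangle=nP(u)$.

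The only difference is how the sign $\varepsilon_v=-1$ is obtained. You pair off the remaining roots under $s_{v_k}$, which is valid because for $j\neq k$ the image $s_{v_k}v_j$ is never $\pm v_k$ and never $-v_j$. The paper instead differentiates $t\mapsto P(x+tv_k)$ at a generic point $x\in v_k^\perp$.
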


	Thus, in dimension $d$, this result yields extremal configurations associated with all finite Coxeter systems of total rank $d$. We note that reflection systems have appeared previously in the study of polarization problems, for instance in the work of Leung, Li, and Rakesh \cite{Leung2008}, where they served as examples for which the polarization constant could be computed explicitly. Their role here is different: finite reflection arrangements give rise to a general class of extremal configurations for Conjecture~\ref{strong}. It is natural to ask whether the extremizers are exhausted by the configurations arising from such systems. In dimension two, these configurations are the reflection arrangements of dihedral groups, namely the symmetry groups of regular polygons; in dimension three, they arise from the rank-$3$ finite reflection arrangements, including those associated with regular prisms and Platonic solids (see Figure~\ref{fig:coxeter-gallery} below).
	
	\begin{figure}[h]
		\centering
		\includegraphics[width=0.5\textwidth]{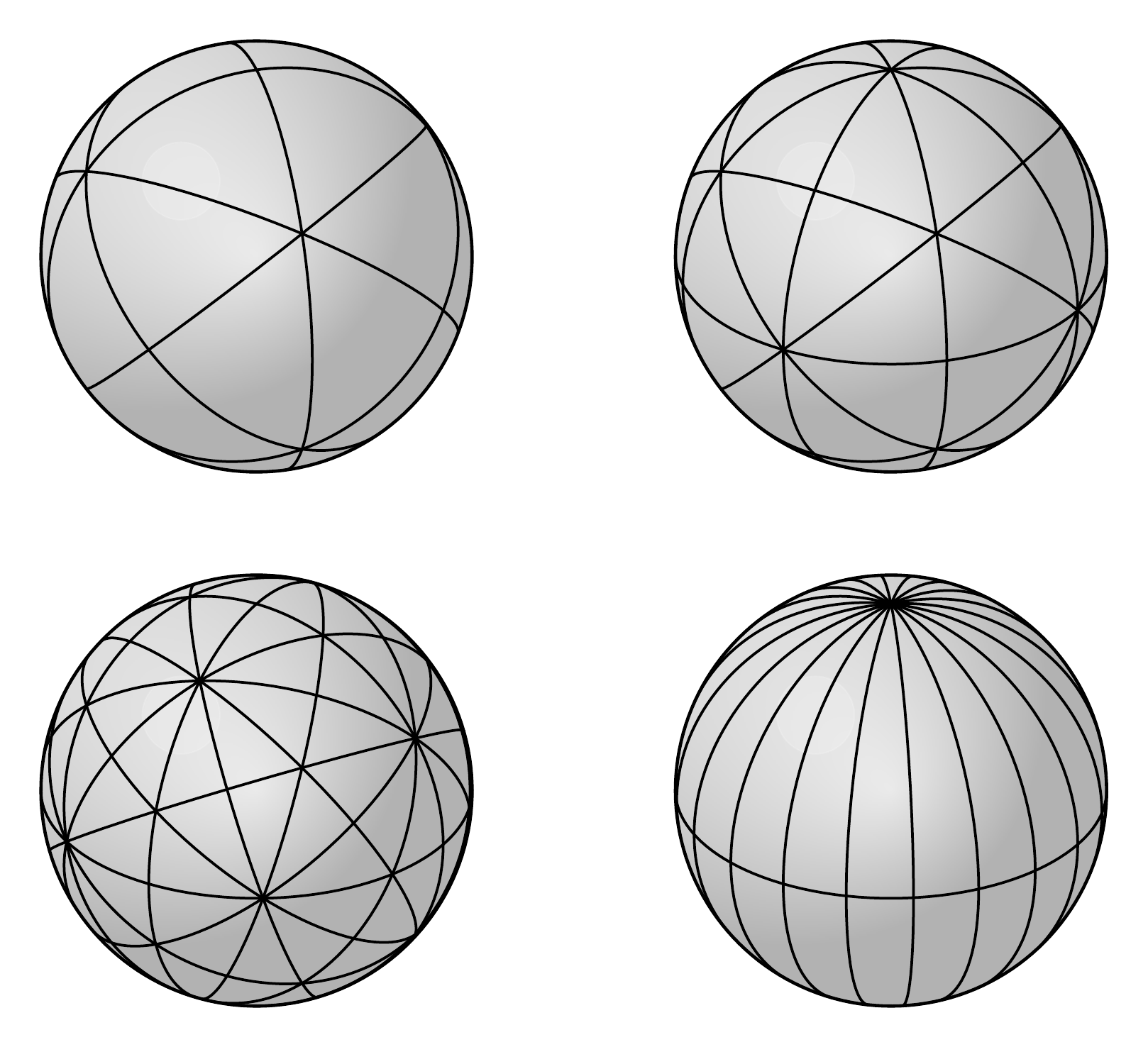}
		\caption{\small Equality cases for the strong polarization problem in $\mathbb{R}^3$ arising from rank-$3$ Coxeter arrangements on $\Sp^2$. The curves are the great-circle cuts induced by the reflecting hyperplanes. Here $A_3$, $B_3$, $H_3$ and $A_1\times I_2(10)$ are the arrangements associated with the tetrahedron, cube, dodecahedron or a prism with a regular decagon base, respectively. Notice that the arrangements associated with the octahedron and icosahedron are $B_3$ and $H_3$ as they are dual to a cube and dodecahedron, respectively.}
		\label{fig:coxeter-gallery}
	\end{figure}

	We now turn to the origins of polarization problems.

% ------------------------------------------------------------
% The polarization problem
% ------------------------------------------------------------
\subsection*{The polarization problem}

	The polarization problem emerged from the study of multilinear operators on Banach spaces. A fundamental geometric consequence of the Hahn--Banach theorem is that every vector in a Banach space can be normed by a continuous linear functional: for every $x\in X$, there exists $f\in X^*$ with $\norm{f}=1$ satisfying $f(x)=\norm{x}.$ The picture becomes far more complex when several linear functionals are considered. Given $f_1,\dots,f_n\in X^*$, these functionals may not attain their norm at the same point simultaneously. It is therefore natural to measure this discrepancy by asking how large the product $f_1(x)\cdots f_n(x)$ can be on the unit ball of $X$. This problem depends, of course, on the geometry of the space $X$ in question. Motivated by the problem of quantifying this phenomenon, the polarization constants of Banach spaces were introduced. The $n$th polarization constant of $X$, denoted by $c_n(X)$, is the smallest
	constant $C>0$ such that
	$$
	\norm{f_1}\cdots\norm{f_n}\leq C \norm{f_1\cdots f_n}
	$$
	for every $f_1,\dots,f_n\in X^*$, where
	$$
	\norm{f_1\cdots f_n} = \sup_{\norm{x}\leq 1}\abs{f_1(x)\cdots f_n(x)}.
	$$
	It is a known consequence of Ball's plank theorem \cite{Ball1991} that there is a universal bound $c_n(X)\leq n^n$ for every Banach space $X$ and every $n\geq 1$. Moreover, this estimate is sharp in general. Indeed, considering $X=\ell_1^n$ and the coordinate functionals $f_j(x)=x_j$, one has $\norm{f_j}=1$ for every $j$, while
	$$
	\norm{f_1\cdots f_n} = \sup_{\norm{x}_{\ell_1^n}\leq 1}|x_1\cdots x_n| = \frac{1}{n^n}.
	$$
	The last equality follows from the Arithmetic--Geometric Mean inequality, with equality at $x=(1/n,\dots,1/n)$. This shows that the universal estimate is best possible in the class of Banach spaces. As observed by Benítez, Sarantopoulos and Tonge \cite{Benitez1998}, an application of John’s theorem on maximal-volume ellipsoids shows that $c_n(\ell_2^n)\leq c_n(X)\leq n^{n/2}c_n(\ell_2^n)$ for any $n$-dimensional Banach space $X$ \cite{John1948} (see also \cite{Ball1992}). This connection suggests the study of the polarization problem for Hilbert spaces only.

	In the Hilbert space case the problem turns out to be far more subtle. A conjecture often attributed to Benítez, Sarantopoulos and Tonge asks whether for any Hilbert space $\mathcal{H}$, the sharper estimate
	$$
	c_n(\mathcal{H})\leq n^{n/2}
	$$
	holds \cite{Benitez1998}, with equality if $\dim(\mathcal{H})= n$. They claimed its truth for any $n\leq 4$. To support this conjecture, Pappas and Révész
	verified it for $n \leq 5$ in \cite{Pappas2004}. We will refer to it in the following equivalent form:

	\begin{conjecture}[Real polarization problem \cite{Benitez1998}]\label{weak}
		Given unit vectors $v_1,\dots,v_n \in \mathbb{S}^{d-1}$, there exists a unit
		vector $u\in\mathbb{S}^{d-1}$ such that
		\begin{equation*}
			\prod_{j=1}^n|\langle v_j,u\rangle|\geq n^{-n/2}.
		\end{equation*}
		Moreover, $\sup_{\norm{x}=1}\abs{\langle v_1,x\rangle\cdots\langle v_n,x\rangle} = n^{-n/2}$ if and only if $d \geq n$ and $v_1,\dots,v_n$ form an orthonormal set of $\R^d$.
	\end{conjecture}

	In the complex case, this question was settled by Arias de Reyna \cite{AriasdeReyna1998}. His proof relies on the relation between complex Gaussian moments and permanents, together with Lieb’s permanent inequality for positive semidefinite matrices \cite{Lieb1966}. In \cite{BALL2001} Ball later gave a different proof of this result as a consequence of his solution
	of the complex plank problem for Hilbert spaces (see
	\cite{OrtegaMoreno2022} for a streamlined version of Ball's argument). A proof of the real case has been elusive for about three decades, despite the fact that the correct asymptotic behavior was known.

	It should be noted that Conjecture~\ref{weak} follows from the resolution of the stronger Conjecture \ref{strong}, except for the characterization of extremal cases (see Lemma~\ref{lem:AGM} in the Appendix). Before proceeding, we briefly review the state of the art. To the best of our knowledge, the first estimate for products of linear forms over the reals appeared in the sixties in the work of Marcus and Minc \cite{Marcus1961}, in connection with their study of permanent inequalities. Marcus later returned to this question, obtaining an eigenvalue-dependent lower bound \cite{Marcus1997}. It also appears in the work of Ryan and Turett \cite{Ryan1998}. In both works the problem seems to have been brought to their attention by Sarantopoulos.

	Several authors have since contributed to the real polarization problem, either by proving the conjecture in special cases or by obtaining general upper bounds for $c_n(\mathbb{R}^n)$. One line of results seeks bounds of the form $c_n(\mathbb R^n)\leq (C n)^{n/2}$, with different absolute constants $C$. Litvak, Milman and Schechtman obtained such a bound with $C = 2e^{\gamma}\approx 3.56$ (where $\gamma$ denotes the Euler--Mascheroni constant). This was refined by García-Vázquez and Villa, who obtained sharper finite-dimensional estimates with the same asymptotic behavior. Révész and Sarantopoulos obtained the estimate $c_n(\mathbb R^n)\leq 2^{n/2-1} n^{n/2}$, which Frenkel improved to $c_n(\mathbb R^n)\leq (3^{3/2}e^{-1} n)^{n/2}$. Finally, Muñoz-Fernández, Sarantopoulos and Seoane-Sepúlveda proved the asymptotically sharper bound $c_n(\mathbb R^n)\leq n2^{n/4}n^{n/2}$. For more details we refer the reader to \cite{Litvak1998, GarciaVazquez1999,Revesz2004,Frenkel2008} and \cite{MunozFernandez2010}, respectively.

	This problem has already been approached from several perspectives. A particularly natural one comes from the connection between polarization constants and plank problems. Motivated by Bang's method in his solution to the plank problem of Tarski \cite{Bang1951}, one may try to choose signs $\varepsilon_j\in\{-1,1\}$ so that
	$$
	v=\sum_{j=1}^n \varepsilon_j v_j
	$$
	has maximal norm, and then test the conjectured inequality at the unit vector $u=v/\norm{v}.$ This vector is often called the normalized longest signed sum, or Bang's vector,
	associated with the system. In \cite{Pinasco2023}, Pinasco gave a {\em tour de force} proof showing that this choice indeed proves the conjecture for all $n\leq 14$. However, this approach cannot fully settle the conjecture: Matolcsi and Muñoz constructed an example for $n=34$ in which the normalized longest signed sum fails to satisfy the desired lower bound \cite{Matolcsi2006}. As a matter of fact, we prove that the conjectured inequality holds at a different point on the sphere.

	Before we continue, let us introduce some notation we will use in the sequel: given vectors $v_1,\ldots,v_n\in\mathbb{R}^d$, we consider the polynomial
	$$
	P(x)=\prod_{j=1}^n \langle v_j,x\rangle,
	$$
	for any $x\in\mathbb{R}^d$. We define $\mathscr{E}(P)$ to be the set of {\em local extrema} of the polynomial $P$ considered as a function on the sphere $\Sp^{d-1}$, namely those points $u\in\Sp^{d-1}$ such that $\nabla P(u)$ is proportional to $u$ and $P(u)\neq 0$. The last condition distinguishes these points from the {\em larger} set of critical points.

	A possible approach to solving the strong polarization conjecture is motivated by Ball’s solution of the complex plank problem \cite{Ball1991}: one may optimize $P$ on the sphere and attempt to prove the desired inequality at the maximizer. In the planar case this was developed by Ambrus in \cite{Ambrus}. There he reformulated the problem in terms of inverse eigenvectors of Gram matrices (see also \cite{Ambrus2013}). Let us mention in passing that this variational approach was later used by the second author to give an alternative proof of Fejes Tóth's conjecture \cite{OrtegaMoreno}, originally proved by Jiang and Polyanskii \cite{Jiang2017}. Zhao subsequently streamlined the argument by avoiding Gram matrices \cite{Zhao2022}.

	In the present paper, we go one step further. Rather than looking for a single distinguished optimizer, we consider the set of {\em all} extremal points on the sphere simultaneously and show that at least one of them satisfies the desired inequality. The key observation is the existence of a positive-weight function on the set of extremal points such that, when certain quantities are averaged with respect to these weights, the desired conclusion follows. We now state our main result:

	\begin{introtheorem}
		\label{main}
		Let $v_1,\ldots,v_n$ be unit vectors in $\mathbb{R}^d$, no two of which are parallel. Then, there exists a weight function $\mu:\mathscr{E}(P)\rightarrow(0,\infty)$ such that
		$$
		\sum_{u\in \mathscr{E}(P)}\left(\sum_{j=1}^n\frac{1}{\langle v_j, u\rangle^2}-n^2\right)\mu(u)= 0.
		$$
		Furthermore,
		$$
		\mu(u)=\det\left(I+\frac{1}{n}\sum_{j=1}^n\frac{v_j\otimes v_j}{\langle v_j,u\rangle^2}\right)^{-1}.
		$$
	\end{introtheorem}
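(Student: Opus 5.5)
The plan is to read the weight $\mu(u)$ as the reciprocal of a Jacobian determinant and then apply a global residue identity of Euler--Jacobi type. We work on $\Omega=\R^d\setminus\bigcup_j v_j^\perp$ with the convex function
$$
\varphi(x)=\tfrac12\norm{x}^2-\tfrac1n\log\abs{P(x)},
\qquad
G(x)=\nabla\varphi(x)=x-\frac1n\sum_{j=1}^n\frac{v_j}{\langle v_j,x\rangle}.
$$
Its derivative is $DG(x)=I+\frac1n\sum_j \frac{v_j\otimes v_j}{\langle v_j,x\rangle^2}$, which is exactly the matrix in the statement of Theorem~\ref{main}.

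First we show that the zeros of $G$ are exactly $\mathscr{E}(P)$. If $u\in\Sp^{d-1}$ is a local extremum, then $\nabla\log\abs{P}(u)=\lambda u$. Euler's identity $\langle x,\nabla \log\abs{P}(x)\rangle=n$ forces $\lambda=n$, so $G(u)=0$. Conversely, if $G(x)=0$, pairing with $x$ gives $\norm{x}^2=1$.

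Second, we count the zeros and check that they are real and have positive weight. On each chamber $C$ of the hyperplane arrangement, $\varphi$ is strictly convex, since $DG\succ 0$. It also tends to $+\infty$ at $\partial C$ and at infinity, because $\tfrac12\norm{x}^2$ dominates $\log\abs{P}$. Hence $\varphi$ has exactly one critical point in each chamber, and $G|_C$ is a diffeomorphism onto $\R^d$. Thus $\mathscr{E}(P)$ is in bijection with the chambers, every zero is nondegenerate, and $\mu(u)=\det DG(u)^{-1}>0$.

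Third, we rewrite the target sum in terms of $M=DG(u)$. Since $\tr M=d+\frac1n\sum_j\langle v_j,u\rangle^{-2}$, we have
$$
\sum_j\frac{1}{\langle v_j,u\rangle^2}-n^2=n\bigl(\tr M-d-n\bigr).
$$
So the claim is $\sum_{G(u)=0}\frac{\tr DG(u)-(d+n)}{\det DG(u)}=0$. We will derive this from a global residue theorem for the polynomial system obtained by clearing denominators,
$$
F_i(x)=P(x)\,x_i-\frac1n\sum_j (v_j)_i\prod_{k\neq j}\langle v_k,x\rangle,\qquad \deg F_i=n+1 .
$$
At a zero $u$ of $G$ we have $\det DF(u)=P(u)^d\det DG(u)$. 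The Euler--Jacobi theorem then gives $\sum_{F=0}h/\det DF=0$ for every polynomial $h$ of degree below $\sum_i(\deg F_i-1)=dn$, provided the zeros of $F$ are simple and none lie at infinity. The natural test functions are $h=P^d\cdot q$, where $q$ is built from the trace. To avoid rational functions, we will write $\tr M/\det M$ through the adjugate, $\tr(\operatorname{adj}M)/\det M=\tr(M^{-1})$, or through the scaling derivative. Indeed, $u_s=\sqrt{s}\,u$ solves $G_s=x-\frac sn\sum_j v_j/\langle v_j,x\rangle=0$, and differentiating $\log\det DG_s(u_s)$ in $s$ produces the trace term.

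If the residue bookkeeping becomes unmanageable, we will use an analytic alternative that relies on the chamber diffeomorphisms. For a suitable vector field $W$, we sum over chambers the identity $\int_C \operatorname{div}W=0$ (boundary terms vanish by the decay of $\varphi$). We then change variables $y=G(x)$ and localize with a Gaussian $e^{-\norm{y}^2/2t}$ as $t\to0$, which picks up each zero with weight $1/\det DG$.

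The main obstacle is the third step. We must control the spurious zeros of $F$: those on the hyperplanes, where $P=0$ but the $F_i$ still vanish, and possible zeros at infinity of the homogeneous top-degree part $P(x)\,x$. These must either be excluded or shown to contribute zero residue, and this is where the hypothesis that no two $v_j$ are parallel should enter. We must also exhibit a numerator $h$ of degree $<dn$ that reproduces $\tr M-d-n$. We would first handle a generic perturbation, where everything is nondegenerate, and then pass to the limit, since both sides depend continuously on the $v_j$ in a fixed chamber structure.
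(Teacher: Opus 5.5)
Your preliminary steps are correct. The zeros of $G$ are exactly $\mathscr{E}(P)$, there is one per chamber, $\sum_j\langle v_j,u\rangle^{-2}-n^2=n(\tr M-d-n)$, and $\det DF(u)=P(u)^d\det DG(u)$. The decisive step fails, however: Lemma~\ref{eulerjacobi} cannot be applied to your system $F$, and the obstruction is structural, not a matter of genericity.

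Since $F=P\,x-\frac1n\nabla P$ and $\nabla P$ vanishes wherever two linear factors vanish, $F\equiv 0$ on every flat $v_j^\perp\cap v_k^\perp$. These flats are positive-dimensional as soon as $d\ge 3$. Moreover, the leading forms $P(x)x_i$ vanish simultaneously on the whole hypersurface $\{P=0\}$ at infinity. So $F$ never has $(n+1)^d$ distinct simple affine zeros, and no perturbation of the $v_j$ changes this.

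The numerator is the easy part. On $\mathscr{E}(P)$ one has $\Delta P(u)=P(u)\bigl(n^2-\sum_j\langle v_j,u\rangle^{-2}\bigr)$, so for your $F$ you would take $h=-P^{d-1}\Delta P$, of degree $nd-2$. Your adjugate and scaling ideas do not give this; e.g.\ $\tr(\operatorname{adj}M)/\det M=\tr M^{-1}$ is not the quantity you need.

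The idea you are missing is the paper's choice of a non-degenerate system. When $n=d$ and the $v_j$ form a basis with dual basis $w_j$, the critical-point equation is equivalent to the quadratic system $n\langle v_j,x\rangle\langle w_j,x\rangle=1$, $j=1,\dots,n$. Each equation forces $\langle v_j,x\rangle\neq0$, so no solution lies on the hyperplanes. Every complex solution is real (Lemma~\ref{realpoints}), and there is exactly one solution per orthant, so the $2^n$ solutions saturate B\'ezout's bound. Euler--Jacobi then applies with $\deg g\le n-1$. Taking $g=\Delta P$, together with $\det J_h(u)=P(u)\det M$, gives the theorem directly.

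Your limiting step is also a gap: there is no ``fixed chamber structure'' for the sums to vary continuously in. To reach a basis, the paper embeds in $\R^n$ and rotates the dependent vectors out of their span. This changes the arrangement and raises the number of extrema to $2^n$. The paper must then show that the extra extrema of $P_t$ (those not continued from $\mathscr{E}(P)$ by the implicit function theorem) accumulate only where two factors $\langle v_j,u\rangle$ and $\langle v_k,u\rangle$ tend to $0$, and that their weighted contributions vanish. This uses the bound $\det M\ge 1+\frac1n\sum_j\langle v_j,u\rangle^{-2}+\frac1{n^2}\sum_{j<k}\sin^2\theta_{jk}\,\langle v_j,u\rangle^{-2}\langle v_k,u\rangle^{-2}$, which is precisely where the non-parallel hypothesis enters. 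Your divergence/Gaussian alternative is too unspecified (no field $W$ is given) to close either gap.
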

	As an immediate consequence, there exists a local extremum $u\in\mathscr{E}(P)$ such that
	$$
	\sum_{j=1}^n \frac{1}{\langle v_j,u\rangle^2}\leq n^2.
	$$
	The proof of Theorem~\ref{main} relies on a delicate application of a deep result in algebraic geometry, the Euler–Jacobi vanishing theorem (see Lemma~\ref{eulerjacobi}). Roughly speaking, this theorem states that, for a finite system of polynomial equations with well-behaved isolated solutions, a certain weighted sum of the values of a polynomial over those solutions is automatically zero whenever the polynomial has sufficiently low degree.

	This result shows that Conjecture~\ref{strong} holds at this critical point. Consequently, Conjecture~\ref{weak} follows \emph{a posteriori}, except for the characterization of extremal cases (see Lemma~\ref{lem:AGM} in the Appendix).

	A system of vectors $v_1,\dots,v_n\in \Sp^{d-1}$ is an {\em extremal configuration} of Conjecture~\ref{weak} if the associated polynomial $P$ attains the conjectured bound, namely
	$$
	\sup_{u\in \Sp^{d-1}}|P(u)| = n^{-\frac{n}{2}}.
	$$
	As an application of Theorem~\ref{main}, we also resolve the classification of extremal configurations of Conjecture~\ref{weak}.
	\begin{introtheorem}\label{weakextremal}
		Let $v_1,\dots,v_n\in \Sp^{d-1}$ be an extremal configuration of Conjecture~\ref{weak}. Then, $d\geq n$ and $v_1,\dots,v_n$ form an orthonormal set.
	\end{introtheorem}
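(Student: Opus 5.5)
Assume $\sup_{u\in\Sp^{d-1}}|P(u)|=n^{-n/2}$. First reduce to the case where no two of the $v_j$ are parallel, so that Theorem~\ref{main} applies. If $v_i=\pm v_k$ for some $i\neq k$, then $P$ contains the factor $\langle v_i,x\rangle^2$. Split off this repeated factor and apply the already established inequality (Theorem~\ref{main} combined with Lemma~\ref{lem:AGM}) to the remaining vectors, or group the vectors with multiplicities and use a weighted version of the AGM step. This should give a strictly larger supremum, contradicting extremality. The cleanest route is a direct perturbation: rotating one copy of the repeated vector slightly should not decrease the supremum by more than $o(\cdot)$, while the generic configuration already satisfies the bound. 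I expect to handle this case separately and rather crudely. Where possible, I will also treat it by applying the argument below with repeated vectors, since Lemma~\ref{lem:AGM} only needs the conclusion at one point $u$.

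In the main case (no two vectors parallel), take the global maximizer $u_0$ of $|P|$ on the sphere. Then $u_0\in\mathscr{E}(P)$, and the Lagrange condition gives
$$\sum_j \frac{v_j}{\langle v_j,u_0\rangle}=n\,u_0.$$
By Theorem~\ref{main}, the weighted average of $\sum_j\langle v_j,u\rangle^{-2}-n^2$ over $\mathscr{E}(P)$ vanishes, with all weights $\mu(u)>0$. So either every $u\in\mathscr{E}(P)$ satisfies $\sum_j\langle v_j,u\rangle^{-2}=n^2$, or some $u$ gives a strict inequality. In the strict case, Lemma~\ref{lem:AGM} (AGM: $\prod_j\langle v_j,u\rangle^{-2}\le(\frac1n\sum_j\langle v_j,u\rangle^{-2})^n<n^n$) yields $|P(u)|>n^{-n/2}$, contradicting extremality. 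Hence equality $\sum_j\langle v_j,u\rangle^{-2}=n^2$ holds at every $u\in\mathscr{E}(P)$. Moreover, at any $u$ with $|P(u)|=n^{-n/2}$ (in particular at $u_0$), equality in AGM forces $|\langle v_j,u_0\rangle|=n^{-1/2}$ for all $j$.

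Next, extract orthonormality from these conditions. At $u_0$, put $\varepsilon_j=\operatorname{sign}\langle v_j,u_0\rangle$ and $w=\sum_j\varepsilon_j v_j$. The Lagrange condition becomes $\sqrt n\,w=n u_0$, so $\|w\|=\sqrt n$. Expanding,
$$\|w\|^2=n+\sum_{i\neq k}\varepsilon_i\varepsilon_k\langle v_i,v_k\rangle,$$
so the signed off-diagonal sum vanishes. This alone is not enough, so use second-order information at the maximum. The Hessian condition for a maximum of $\log|P|$ on the sphere at $u_0$ says that for all tangent $h\perp u_0$,
$$-\sum_j \frac{\langle v_j,h\rangle^2}{\langle v_j,u_0\rangle^2}+n\|h\|^2\le 0,$$
that is, $n\sum_j\langle v_j,h\rangle^2\ge n\|h\|^2$. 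Combining this with the trace identity $\sum_j\|v_j\|^2=n$ and the decomposition $\sum_j\langle v_j,u_0\rangle^2=1$, I get that the frame operator $S=\sum_j v_j\otimes v_j$ satisfies $S\ge I$ on $u_0^\perp$ with $\operatorname{tr} S=n$ and $\langle Su_0,u_0\rangle=1$. If $d\ge n$ this is compatible with orthonormality. If $d<n$, then $\operatorname{tr} S=n>d$ allows slack, so I need more than this.

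The hard step is ruling out non-orthonormal configurations that satisfy all these first- and second-order conditions. Here I would exploit the full strength of the equality from Theorem~\ref{main}: every local extremum $u$, not just the maximizer, satisfies $\sum_j\langle v_j,u\rangle^{-2}=n^2$, while $|P(u)|\le n^{-n/2}$. Since AGM then forces $|P(u)|\ge n^{-n/2}$, we get $|P(u)|=n^{-n/2}$ and $|\langle v_j,u\rangle|=n^{-1/2}$ for every $u\in\mathscr{E}(P)$ and every $j$. In each chamber of the hyperplane arrangement $\{v_j^\perp\}$, $|P|$ has a maximum, and that maximum lies in $\mathscr{E}(P)$. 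So every chamber contains a point $u_C$ with $\varepsilon^C_j\langle v_j,u_C\rangle=n^{-1/2}$ and $\sum_j\varepsilon^C_j v_j=\sqrt n\,u_C$. If $n>d$, the linear system $\langle v_j,u\rangle=\pm n^{-1/2}$ for all $j$ is overdetermined, yet it must be solvable for every realizable sign pattern. Take two adjacent chambers that differ only in the sign of $v_1$. Their points satisfy $\langle v_j,u_C-u_{C'}\rangle=0$ for $j\neq1$, while $\langle v_1,u_C-u_{C'}\rangle=2n^{-1/2}$. Hence $v_1\notin\operatorname{span}\{v_j:j\neq1\}$. Applying this to every $j$ (every hyperplane bounds some pair of adjacent chambers) shows the $v_j$ are linearly independent, so $d\ge n$. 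Finally, with linear independence, using $\|w_C\|^2=n$ for all sign patterns $\varepsilon$ (now all $2^n$ patterns are realized) and averaging over $\varepsilon_i\varepsilon_k$ gives $\langle v_i,v_k\rangle=0$. This is the conclusion of Theorem~\ref{weakextremal}.
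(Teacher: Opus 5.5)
Your main case (no two $v_j$ parallel) is correct and essentially follows the paper. Theorem~\ref{main} and the AGM inequality force $\sum_j\langle v_j,u\rangle^{-2}=n^2$ and $|\langle v_j,u\rangle|=n^{-1/2}$ at every $u\in\mathscr{E}(P)$, hence $u=n^{-1/2}\sum_j\varepsilon_jv_j$, and adjacent chambers across each $v_j^\perp$ then finish the proof. The paper concludes with $G\varepsilon=\varepsilon$ for the realized sign vectors and shows that these span $\R^n$. Your ending (linear independence, then averaging $\|\sum_j\varepsilon_jv_j\|^2=n$ over all sign patterns) also works. More directly, $u_C-u_{C'}=2\varepsilon_1n^{-1/2}v_1$ is orthogonal to every $v_j$ with $j\neq1$. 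The second-order/frame-operator paragraph is not needed.

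The genuine gap is the reduction to the case with no two parallel vectors, which you only sketch. Your perturbation idea, combined with continuity of the supremum, yields only $\sup|P|\geq n^{-n/2}$ for configurations with a repeated vector. Contradicting extremality requires the \emph{strict} inequality, and nothing in your sketch produces it.

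Your fallback of running the main argument with repeated vectors also fails, because Theorem~\ref{main} is false there. For $n=d=2$ and $v_1=v_2$, one has $\mathscr{E}(P)=\{\pm v_1\}$ and $\sum_j\langle v_j,u\rangle^{-2}-n^2=-2$ at both points. The adjacent-chamber step also needs pairwise distinct hyperplanes. A weighted-AGM route would require a weighted analogue of Theorem~\ref{main}, which is not available.

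The missing idea, supplied by the paper's Lemma~\ref{extremalreduction}, is a perturbation that strictly \emph{lowers} the supremum. If $P=\langle v_1,x\rangle^2Q$, replace both copies by $v_{\pm\theta}=\cos\theta\,v_1\pm\sin\theta\,w$, with $w\perp v_1$ a unit vector. Then $P_\theta=(\cos^2\theta\langle v_1,x\rangle^2-\sin^2\theta\langle w,x\rangle^2)\,Q$, so $|P_\theta(u)|\leq\max\{\cos^2\theta\,|P(u)|,\sin^2\theta\}$ on $\Sp^{d-1}$. For small $\theta>0$ this gives $\sup|P_\theta|<\sup|P|=n^{-n/2}$. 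Iterating produces a configuration with no two parallel vectors and supremum strictly below $n^{-n/2}$, contradicting Theorem~\ref{main} together with Lemma~\ref{lem:AGM}.

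Moving only one copy, as you propose, gives no such control. The pair then contributes $\cos t\,\langle v_1,x\rangle^2+\sin t\,\langle v_1,x\rangle\langle w,x\rangle$, and the cross term has no sign.
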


	This together with our previous considerations settles Conjecture~\ref{weak} completely.

	The article is organized as follows: in Section \ref{sec:extrema} we study the set of local extrema of $P$. The proof of the main result is streamlined in Section~\ref{sec:main} in the case of a basis. In Section~\ref{sec:reduction} we show that this case implies the general statement as well. The gist of the proof relies on an application of the Euler-Jacobi vanishing theorem, which requires us to work over the complex numbers: an issue that is resolved in due time as all solutions of the equations we will deal with turn out to be real. In Section~\ref{sec:weakextremal} and Section~\ref{sec:strongextremal}, we prove Theorems~\ref{weakextremal} and Proposition~\ref{strongextremal} on the extremal configurations. Finally, for the reader's sake we have included an Appendix with auxiliary calculus lemmata and simple arguments we will refer to throughout the paper.

% ============================================================
% On the set of local extrema
% ============================================================
\section{On the set of local extrema}\label{sec:extrema}
	In this paper we will define
	$$
	\angles{x, y} = \sum_{j = 1}^d x_jy_j,
	$$
	for any pair of vectors $x,y\in\mathbb{C}^d$. This coincides with the inner product of vectors whenever they are real, which makes our choice of notation self-explanatory, and it is a suitable extension to the complex case. Thus, no complex conjugates are involved. We will abuse notation accordingly. For example, given a function $f:\C^d\to \C^d$, we denote its Jacobian matrix by
	$$
	J_f(x) = \left(\frac{\partial f_j}{\partial x_k}(x)\right)_{j,k = 1,\ldots,d}.
	$$
	Similarly, we refer to the Laplacian as the operator given by the formula
	$$
	\Delta = \sum_{j=1}^d \frac{\partial^2}{\partial x_j^2}.
	$$
	In our application, the functions will be polynomials. When restricted to Euclidean space, these operations clearly agree with the usual definitions.

	Extremal points of $P$ on the sphere are characterized by the identity
	\begin{equation}\label{eq:critical}
		u=\frac{1}{n}\sum_{j=1}^n \frac{v_j}{\langle v_j,u\rangle}
	\end{equation}
	(such a vector is automatically a unit vector, cf. Lemma~\ref{criticalpoints} in the Appendix, for more details).

	\begin{lemma}\label{realpoints}
		Let $v_1,\dots,v_n\in \R^d$. Then every complex solution
		$u\in \mathbb{C}^d$ of equation \eqref{eq:critical} is a real unit vector.
	\end{lemma}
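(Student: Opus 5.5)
We plan to take a complex solution $u\in\C^d$ of \eqref{eq:critical} and pair the equation against the complex conjugate $\bar u$. The idea is that the bilinear pairing $\angles{\cdot,\cdot}$ carries no conjugation, whereas the Hermitian quantity $\angles{u,\bar u}=\sum_k |u_k|^2=\norm{u}^2$ is real and positive. Note first that \eqref{eq:critical} only makes sense when $\angles{v_j,u}\neq 0$ for every $j$, so we write $a_j=\angles{v_j,u}\in\C\setminus\{0\}$. Since the $v_j$ are real, $\angles{v_j,\bar u}=\bar a_j$.

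Pairing \eqref{eq:critical} with $\bar u$ gives
$$
\norm{u}^2=\angles{u,\bar u}=\frac1n\sum_{j=1}^n\frac{\angles{v_j,\bar u}}{\angles{v_j,u}}=\frac1n\sum_{j=1}^n\frac{\bar a_j}{a_j}.
$$
Each term $\bar a_j/a_j$ has modulus one, so the right-hand side has modulus at most $1$. The left-hand side is a nonnegative real number, and it is positive because $u\neq0$ (otherwise every $a_j=0$). Hence $0<\norm{u}^2\le 1$.

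To obtain the reverse inequality, we pair \eqref{eq:critical} with $u$ itself:
$$
\angles{u,u}=\frac1n\sum_{j=1}^n \frac{a_j}{a_j}=1.
$$
This says $\sum_k u_k^2=1$. Writing $u=x+iy$ with $x,y\in\R^d$, this becomes $\norm{x}^2-\norm{y}^2=1$ together with $\angles{x,y}=0$. Therefore
$$
\norm{u}^2=\norm{x}^2+\norm{y}^2=1+2\norm{y}^2\ge 1.
$$
Combined with $\norm{u}^2\le1$, this forces $y=0$. So $u$ is real, and $\norm{u}^2=\angles{u,u}=1$, i.e. $u$ is a unit vector.

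There is essentially no obstacle here. The only care needed is to keep the unconjugated bilinear form separate from the Hermitian norm, and to note that the definition of a solution already excludes $\angles{v_j,u}=0$. As a by-product, the argument also shows that equality $\bar a_j/a_j=1$ must hold for all $j$, which is consistent with $u$ being real.
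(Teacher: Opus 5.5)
Your proof is correct, and it takes a different route from the paper's. The paper writes $u=a+ib$, expands each $1/\langle v_j,u\rangle$ into real and imaginary parts, and isolates the imaginary part of the equation, $nb=-\sum_j \langle v_j,b\rangle\,|\langle v_j,u\rangle|^{-2}\,v_j$. Pairing this with $b$ gives $n\|b\|^2\le 0$, so $b=0$, and unit length is then taken from the real case via Lemma~\ref{criticalpoints}.

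You never split the equation into real and imaginary parts. Instead you compare two pairings. The bilinear pairing with $u$ gives $\langle u,u\rangle=1$, hence $\|u\|^2=1+2\|y\|^2$. The Hermitian pairing with $\bar u$ gives $\|u\|^2\le 1$, because each $\overline{a_j}/a_j$ is unimodular.

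What your version buys is that reality and unit length both come out of the same two identities, using nothing beyond the triangle inequality. What the paper's version buys is a more structural statement. The imaginary part lies in the kernel of the positive definite operator $nI+\sum_j |\langle v_j,u\rangle|^{-2}\,v_j\otimes v_j$. This operator has the same form as the Hessian of $\Psi$ in Lemma~\ref{finitepoints}, and the argument does not need $\langle u,u\rangle=1$ first. Your remark that $\|u\|^2>0$ is harmless but not needed.
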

	\begin{proof} By our previous comments, it is enough to prove that it is real.
		Given a solution $u$, none of the denominators in \eqref{eq:critical} can vanish. Let us write $u = a + ib$ with $a,b\in\R^d$. Since each $v_j$ is real, for every $j=1,\ldots,n$ we have
		$$
		\langle v_j,u\rangle
		=
		\langle v_j,a\rangle+i\langle v_j,b\rangle .
		$$
		Furthermore, as none of the denominators in \eqref{eq:critical} vanish, each of these complex numbers is non-zero.
		So (\ref{eq:critical}) becomes
		\begin{equation}\label{fixedx2}
			n(a + ib) = \sum_{j=1}^{n} \frac{\angles{v_j,a}}{\angles{v_j,a}^2 + \angles{v_j,b}^2}v_j -i \sum_{j=1}^n \frac{\angles{v_j,b}}{\angles{v_j,a}^2+\angles{v_j,b}^2}v_j.
		\end{equation}
		Comparing imaginary parts in (\ref{fixedx2}) yields
		\begin{equation}\label{fixedxi}
			nb = - \sum_{j = 1}^n \frac{\angles{v_j,b}}{\angles{v_j,a}^2 + \angles{v_j,b}^2}v_j.
		\end{equation}
		Taking the real inner product with $b$ on both sides of \eqref{fixedxi}, we obtain
		$$
		n\Vert b\Vert^2 = - \sum_{j=1}^n \frac{\angles{v_j,b}^2}{\angles{v_j,a}^2+\angles{v_j,b}^2},
		$$
		which implies that $b=0$.
	\end{proof}

	The following observation is also elementary, a version of which appears in \cite{Leung2008}.

	\begin{lemma}\label{finitepoints}
		Let $v_1,\dots,v_n\in \mathbb R^d\setminus\{0\}$. Then, in each connected component of
		$$
		\mathbb R^d\setminus \bigcup_{j=1}^n
		\{x\in\mathbb R^d:\langle v_j,x\rangle=0\},
		$$
		there is a unique solution to the equation
		\begin{equation}\label{real:eq}
			u=\frac{1}{n}\sum_{j=1}^n
			\frac{v_j}{\langle v_j,u\rangle}.
		\end{equation}

		Moreover, if $n=d$ and $v_1,\dots,v_n$ form a basis of $\mathbb R^n$, then the above complement has exactly $2^n$ connected components.
	\end{lemma}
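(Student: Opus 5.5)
Fix a connected component $C$ of the complement of the hyperplane arrangement. On $C$ each linear form $\langle v_j,x\rangle$ has a constant sign $\varepsilon_j\in\{\pm1\}$, and $C$ is an open convex cone, being an intersection of open half-spaces. Consider the function
$$
F(x)=\frac{1}{2}\|x\|^2-\frac{1}{n}\sum_{j=1}^n \log\bigl(\varepsilon_j\langle v_j,x\rangle\bigr),\qquad x\in C .
$$
A direct computation gives $\nabla F(x)=x-\frac1n\sum_j v_j/\langle v_j,x\rangle$. Hence the solutions of \eqref{real:eq} lying in $C$ are exactly the critical points of $F$ on $C$. The Hessian is $I+\frac1n\sum_j \frac{v_j\otimes v_j}{\langle v_j,x\rangle^2}$, which is positive definite, so $F$ is strictly convex on the convex set $C$. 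A strictly convex function has at most one critical point, and any critical point is its global minimizer. This settles uniqueness.

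For existence, I will show that $F$ is coercive on $C$, so that $F(x)\to+\infty$ as $x\to\partial C$ or as $\|x\|\to\infty$ within $C$.
\begin{itemize}
\item Near a boundary point at finite distance, some $\varepsilon_j\langle v_j,x\rangle\to0^+$. The remaining logarithms stay bounded above on bounded sets, so $-\log$ of the vanishing term drives $F\to+\infty$.
\item As $\|x\|\to\infty$, we have $\varepsilon_j\langle v_j,x\rangle\le\|v_j\|\|x\|$. Therefore $F(x)\ge \frac12\|x\|^2-\log\|x\|-c\to\infty$.
\end{itemize}
Consequently the sublevel set $\{x\in C: F(x)\le F(x_0)\}$ is closed in $\R^d$ and bounded, hence compact. $F$ then attains a minimum on it, and this minimum is an interior critical point of $F$ in $C$. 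The only mild care needed is checking that these sublevel sets do not accumulate on $\partial C$; the two bullet points above handle this.

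For the count when $n=d$ and the $v_j$ form a basis, I use the linear isomorphism $T:x\mapsto(\langle v_1,x\rangle,\dots,\langle v_n,x\rangle)$. It maps the arrangement complement homeomorphically onto the complement of the coordinate hyperplanes in $\R^n$. The components of the latter are the $2^n$ open orthants, one for each sign vector $\varepsilon\in\{\pm1\}^n$. Since connected components are preserved under homeomorphism, the original complement also has exactly $2^n$ components.

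I expect no serious obstacle here. The key idea is recognizing \eqref{real:eq} as the gradient equation of the strictly convex log-barrier function $F$, after which uniqueness follows from strict convexity and existence from coercivity.
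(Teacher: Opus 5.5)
Your proposal is correct and follows the paper's proof essentially step for step. The paper uses the same log-barrier function, with $\log(\varepsilon_j\langle v_j,x\rangle)$ in place of $\log|\langle v_j,x\rangle|$ (these agree on $C$), the same blow-up at the boundary and at infinity for existence, strict convexity from the positive definite Hessian for uniqueness, and the isomorphism $T$ onto the open orthants for the $2^n$ count. You are in fact slightly more careful than the paper: you state that $C$ is convex, which strict convexity needs in order to give uniqueness, and you spell out the compact sublevel-set argument.
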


	\begin{proof}

		The connected components are determined by the sign patterns of the quantities $\langle v_i,x\rangle$. More precisely, on each connected component the functions $\operatorname{sign}\langle v_i,x\rangle$, $i=1,\dots,n$, are constant. Conversely, each nonempty region corresponding to a fixed sign pattern is a
		connected component.

		Let $C$ be any connected component. On $C$, none of the quantities $\langle x,v_j\rangle$ vanish. Define $\Psi:C\to\R$ by
		$$
		\Psi(x) = \frac{1}{2}\norm{x}^2 - \frac{1}{n}\sum_{j=1}^n\log\abs{\angles{v_j,x}}.
		$$
		Its gradient is
		$$
		\nabla \Psi(x) = x-\frac{1}{n}\sum_{j=1}^n \frac{v_j}{\angles{v_j,x}}.
		$$
		Thus, critical points of $\Psi$ are exactly solutions of (\ref{real:eq}).

		It remains to prove existence and uniqueness of the solution. To that end, note that if $x$ approaches the boundary of $C$, then for some $j$, $\langle x,v_j\rangle\to 0,$ and so $-\log\abs{\angles{x,v_j}}\to +\infty.$
		On the other hand,
		$$
		\frac{1}{2}\norm{x}^2-\frac{1}{n}\sum_{j=1}^n\log\abs{\angles{v_j,x}}\to+\infty,\quad \text{as } \norm{x}\to\infty,
		$$
		Thus $\Psi(x)\to+\infty$ at the boundary of $C$ and at infinity.

		Therefore $\Psi$ attains a minimum at some point of $C$. Since the minimum is attained in the interior of $C$, its gradient vanishes there, proving the existence of a solution to (\ref{real:eq}).

		To show uniqueness, we compute the Hessian of $\Psi$:
		$$
		\nabla^2\Psi(x)= I + \frac{1}{n}\sum_{j=1}^n \frac{v_j\otimes v_j}{\angles{v_j,x}^2}.
		$$
		Notice that $\nabla^2\Psi(x)$ is a positive definite matrix for all $x\in\R^d$. Therefore $\Psi$ is strictly convex on $C$ and so it has at most one critical point.

		In the case where $d = n$ and $v_1,\dots,v_n$ form a basis of $\R^n$, the number of connected components is exactly $2^n$. Indeed, for each choice of signs $\varepsilon\in\{-1,1\}^n$, consider the set
		$$
		C_\varepsilon
		=
		\{x\in\R^n: \operatorname{sign}\langle v_i,x\rangle=\varepsilon_i, i=1,\dots,n\}.
		$$
		Since $v_1,\dots,v_n$ are a basis, the linear map $T:\R^n\to\R^n$, defined by
		$$
		T(x)=(\langle v_1,x\rangle,\dots,\langle v_n,x\rangle),
		$$
		is an isomorphism. Therefore each set $C_\varepsilon$ corresponds to the inverse image under $T$ of an open orthant of $\R^n$. Hence each $C_\varepsilon$ is nonempty and connected. Since there are $2^n$ open orthants, the complement has exactly $2^n$ connected components.
	\end{proof}
	
	Let $v_1,\dots,v_n \in \R^n$ be a basis consisting of unit vectors, and let $w_1,\dots,w_n$ be its dual basis, that is,
	$$
	\angles{v_j,w_k} = \delta_{jk},
	$$
	for all $j,k$. Define $h:\C^n \to \C^n$ by
	\begin{equation}\label{eq:h}
		h(x) = \sum_{j=1}^n\left(\angles{v_j,x}\angles{w_j,x}-\frac{1}{n}\right)v_j.
	\end{equation}
	This map will allow us to rewrite the preceding extremal point condition as a polynomial system. Note that the vector-valued function $h$ has polynomial components of degree $2$. Indeed, the $j$th component is given by
		$$
		h_j(x)=\sum_{k=1}^n v_{jk}
		\left(
		\langle v_k,x\rangle\langle w_k,x\rangle-\frac1n
		\right),
		$$
		where $v_k=(v_{1k},\dots,v_{nk})$. Hence $h_j$ is a polynomial of degree at most $2$. Its quadratic part is
		$$
		\sum_{k=1}^n v_{jk}\langle v_k,x\rangle\langle w_k,x\rangle.
		$$
		Since $v_1,\dots,v_n$ is a basis, the matrix $(v_{jk})$ has no zero rows nor columns. Thus, for each $j$, there exists $\ell$ such that $v_{j\ell}\neq 0$. It is enough to evaluate at $x=v_{\ell}$ to check that it is not identically zero:
		$$
		\sum_{k=1}^n v_{jk}\langle v_k,v_\ell\rangle\langle w_k,v_\ell\rangle
		=
		v_{j\ell}\langle v_\ell,v_\ell\rangle
		=
		v_{j\ell}\neq 0,
		$$
		because $\langle w_k,v_\ell\rangle=\delta_{k\ell}$ and $v_\ell$ is a unit vector. Therefore, the quadratic part of $h_j$ is nonzero, so $\deg h_j=2.$ Since $j$ was arbitrary, every component of $h$ has degree exactly $2$.
	
	Notice that $h(x)=0$ is equivalent to
	\begin{equation}\label{eq:hs}
		n\angles{w_j,u}\angles{v_j,u} - 1 = 0,
	\end{equation}
	for any $j=1,\ldots,n$. (Indeed, it suffices to notice that both of these are equivalent to $\langle w_j, h\rangle=0$ for any $j$.)

	We can finally characterize the set of local extrema as an algebraic variety, namely
	\begin{lemma}\label{quadraticeqs}
		The identity $$\mathscr{E}(P) = \{x\in\mathbb{C}^n:h(x)=0\}$$
		holds.
	\end{lemma}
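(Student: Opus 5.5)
The plan is to prove the two inclusions by converting both sides into the fixed-point equation \eqref{eq:critical}. Lemma~\ref{criticalpoints} and the discussion preceding Lemma~\ref{realpoints} already identify $\mathscr{E}(P)$ with the set of (real) solutions of \eqref{eq:critical}. So it suffices to show that a point $x\in\C^n$ satisfies $h(x)=0$ if and only if it satisfies \eqref{eq:critical}. Lemma~\ref{realpoints} will then ensure that the complex zero set of $h$ contains no non-real points.

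First I rewrite \eqref{eq:critical} in the dual basis. Since $\{v_j\}$ is a basis with dual basis $\{w_j\}$, every $x\in\C^n$ has the expansion $x=\sum_j \angles{w_j,x}v_j$. This holds over $\C$ as well, because the bilinear pairing involves no conjugation. Suppose $x$ satisfies \eqref{eq:critical}, so in particular $\angles{v_j,x}\neq 0$ for all $j$. Comparing the coefficients of $v_j$ on both sides, or equivalently pairing both sides with $w_j$, gives $\angles{w_j,x}=\frac{1}{n\angles{v_j,x}}$. This is exactly \eqref{eq:hs}, and by the remark preceding the lemma it is equivalent to $h(x)=0$.

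Conversely, suppose $h(x)=0$, so that \eqref{eq:hs} holds for every $j$. Then $n\angles{w_j,x}\angles{v_j,x}=1$, which forces $\angles{v_j,x}\neq0$, and hence $\angles{w_j,x}=1/(n\angles{v_j,x})$. Substituting this into the expansion $x=\sum_j\angles{w_j,x}v_j$ yields \eqref{eq:critical}. By Lemma~\ref{realpoints}, $x$ is then a real unit vector. In addition, $P(x)=\prod_j\angles{v_j,x}\neq0$, and $\nabla P(x)=P(x)\sum_j v_j/\angles{v_j,x}=nP(x)\,x$ is proportional to $x$. Therefore $x\in\mathscr{E}(P)$.

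For the forward inclusion, take $u\in\mathscr{E}(P)$. Then $u$ is a unit vector with $P(u)\neq0$ and $\nabla P(u)=\lambda u$. Pairing with $u$ and applying Euler's identity for the degree-$n$ homogeneous polynomial $P$ gives $\lambda=nP(u)$. Dividing by $P(u)$ then gives \eqref{eq:critical}, and the computation above shows $h(u)=0$.

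The only delicate point is making sure the complex zero set of $h$ introduces no spurious points. This is handled by Lemma~\ref{realpoints} together with the observation that \eqref{eq:hs} automatically excludes the hyperplanes $\angles{v_j,x}=0$. I expect no real obstacle beyond carefully tracking that the basis and dual basis expansion is valid over $\C$ for the bilinear pairing.
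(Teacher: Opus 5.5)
Your proof is correct and follows essentially the same route as the paper: both inclusions pass through the fixed-point equation \eqref{eq:critical}. The forward direction comes from pairing with $w_k$, and the converse from the nondegeneracy of the pairing with the dual basis, which is exactly your expansion $x=\sum_j\angles{w_j,x}v_j$. The only differences are that you explicitly invoke Lemma~\ref{realpoints} to rule out non-real zeros of $h$, and you re-derive the Lagrange multiplier via Euler's identity; the paper leaves the first implicit and delegates the second to Lemma~\ref{criticalpoints}.
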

	\begin{proof}
		Notice that $P(u)\neq 0$ is immediately satisfied in both cases. Suppose that $u\in \mathscr{E}(P)$.

		Taking the inner product with respect to $w_k$ in \eqref{eq:critical} yields
		\begin{equation}\label{crith:eq1}
			\angles{w_k,u} = \angles{w_k,\frac{1}{n}\sum_{j = 1}^n \frac{v_j}{\langle v_j,u\rangle}}=\frac{1}{n}\sum_{j = 1}^n \frac{\angles{w_k,v_j}}{\langle v_j,u\rangle} = \frac{1}{n}\sum_{j = 1}^n \frac{\delta_{kj}}{\langle v_j,u\rangle} = \frac{1}{n\langle v_k,u\rangle}.
		\end{equation}
		This is equivalent to $h(u)=0$, by our remarks after the introduction of $h$. The opposite inclusion requires us to suppose that $h(u)=0$. Using that this is equivalent to the first identity of \eqref{crith:eq1} and that the $w_1,\dots,w_n$ also form a basis of $\R^n$, we conclude that $u\in\mathscr{E}(P)$.
	\end{proof}

\section{Proof of Theorem \ref{main}} \label{sec:main}

	In this section we {\em prove the theorem in the case where $v_1,\ldots,v_n\in\mathbb{R}^n$ form a basis}. The general case will follow from this by a limiting process
	(cf. Section~\ref{sec:reduction} for more details). Our proof relies on a result of Jacobi. The one-dimensional case was advanced by Euler, while the two- and three-dimensional cases were explicitly proved by Jacobi. By modern standards, the higher-dimensional versions are a consequence of the theory of residues in several complex variables on projective spaces.

	\begin{lemma}[Euler-Jacobi vanishing theorem]\label{eulerjacobi}
		Let $h=(h_1,\ldots,h_n)$, where each $h_i$ is a polynomial on $\mathbb{C}^n$ of degree $d_i$. Suppose the system
		$$
		h_1=\cdots=h_n=0
		$$
		has exactly $\prod_{i=1}^n d_i$ distinct solutions. Then, for every polynomial $g$ satisfying
		$$
		\deg(g)\leq \sum_{i=1}^n d_i-(n+1),
		$$
		one has
		$$
		\sum_{h(x)=0}\frac{g(x)}{\det J_h(x)}=0,
		$$
		provided all determinants above do not vanish.
	\end{lemma}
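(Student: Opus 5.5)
The plan is to prove the lemma through Grothendieck residues on $\mathbb{C}^n$, using the transformation law to move the computation to a monomial system where residues are just Laurent coefficients.

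\textbf{Step 1: no zeros at infinity.} Write $h_i=H_i+(\text{lower order terms})$, where $H_i$ is homogeneous of degree $d_i$. Consider the homogenized system in $\mathbb{P}^n$. By B\'ezout's theorem it has $\prod_i d_i$ solutions counted with multiplicity, unless it has a positive-dimensional component. The hypothesis gives exactly $\prod_i d_i$ distinct affine solutions. Hence there is no room for solutions at infinity: the forms $H_1,\dots,H_n$ have no common zero other than $0\in\mathbb{C}^n$. The same count shows that every affine solution has multiplicity one, which matches the proviso $\det J_h(x)\neq 0$.

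\textbf{Step 2: degree-controlled relations.} By the homogeneous Nullstellensatz, $H_1,\dots,H_n$ form a regular sequence and $(H_1,\dots,H_n)$ contains all monomials of degree at least $\sum_i d_i-n+1$. In particular, for $N$ large,
$$
x_j^N=\sum_{k=1}^n A_{jk}H_k, \qquad \deg A_{jk}=N-d_k.
$$
Since the $H_k$ form a regular sequence, the $h_k$ generate a ``degree-compatible'' ideal. This lets us perform division with remainder degree by degree, absorbing lower-order terms inductively. The result is
$$
x_j^N=\sum_{k=1}^n a_{jk}h_k, \qquad \deg a_{jk}\le N-d_k.
$$
I expect this step to be the main obstacle. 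The descending induction on degree needs care: each time we correct a lower-order term, the degree bounds must be preserved. This is exactly where the absence of zeros at infinity is used. My first move is to invoke Macaulay's theorem that, in this situation, the leading forms of elements of $(h)$ are precisely the elements of $(H)$.

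\textbf{Step 3: transformation law and a degree count.} The solutions are simple, so the local Grothendieck residue of $g\,dx/(h_1\cdots h_n)$ at a zero $x$ equals $g(x)/\det J_h(x)$. The sum over all zeros is the total residue $\operatorname{Res}_h(g)$. By the transformation law for residues, with $A=(a_{jk})$,
$$
\operatorname{Res}_h(g)=\operatorname{Res}_{(x_1^N,\dots,x_n^N)}(g\det A).
$$
The right-hand side is the coefficient of $x_1^{N-1}\cdots x_n^{N-1}$ in $g\det A$. That monomial has degree $nN-n$. On the other hand, $\det A$ has degree at most $\sum_k(N-d_k)=nN-\sum_k d_k$. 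Therefore
$$
\deg(g\det A)\le \sum_k d_k-(n+1)+nN-\sum_k d_k = nN-n-1 < nN-n.
$$
So the coefficient vanishes and $\sum_{h(x)=0} g(x)/\det J_h(x)=0$.

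\textbf{Alternative route.} If Step 2 proves too cumbersome, one can instead apply the global residue theorem on $\mathbb{P}^n$. Extend the form $g\,dx_1\wedge\cdots\wedge dx_n/(h_1\cdots h_n)$ to $\mathbb{P}^n$ and check that the degree bound makes its residue along the hyperplane at infinity vanish. This again uses the fact from Step 1 that the polar divisors do not meet at infinity.
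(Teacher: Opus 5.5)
\textbf{Step 2 is false as stated, and Step 3 depends on it.} The identity $x_j^N=\sum_k a_{jk}h_k$ asserts that $x_j^N\in(h_1,\dots,h_n)$. Evaluating at a common zero $p$ of $h$ gives $p_j^N=0$ for every $j$, so all $\prod_i d_i$ distinct zeros would have to be the origin.

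Macaulay's theorem only tells you that $x_j^N$ is the \emph{leading form} of some element of $(h)$. The lift $f_j:=\sum_k A_{jk}h_k$ equals $x_j^N-r_j$ with $\deg r_j\le N-1$, and the descending division cannot remove $r_j$ in general. It stops at a remainder of degree $\le\sum_i d_i-n$, where leading forms need no longer lie in $(H)$. That remainder is nonzero for some $j$, since otherwise $x_j^N\in(h)$ for all $j$. Equivalently, any relation $f=Ah$ forces $V(h)\subseteq V(f)$, which is impossible for $f=(x_1^N,\dots,x_n^N)$ because $V(f)=\{0\}$. So the transformation law cannot carry $\operatorname{Res}_h$ to a pure monomial residue.

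The route can be repaired. Keep $f_j=x_j^N-r_j$ and $A=(A_{jk})$. Then $f=Ah$, $V(f)$ is finite (its leading forms are the $x_j^N$), the global transformation law gives $\operatorname{Res}_h(g)=\operatorname{Res}_f(g\det A)$, and $\deg\det A\le nN-\sum_k d_k$ as before. What is missing is a computation of $\operatorname{Res}_f$:
\begin{itemize}
\item Represent $\operatorname{Res}_f(\phi)$ as $(2\pi i)^{-n}\int_{T_R}\phi\,dx/(f_1\cdots f_n)$ over a large torus $T_R=\{|x_1|=\dots=|x_n|=R\}$. This requires showing that $T_R$ is homologous to the sum of the local residue cycles in the complement of $\bigcup_j\{f_j=0\}$, which is where the absence of zeros at infinity enters.
\item Expand $1/f_j=\sum_{m\ge0}r_j^m x_j^{-N(m+1)}$. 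Every monomial of $\phi\prod_j r_j^{m_j}x_j^{-N(m_j+1)}$ has total degree at most $\deg\phi-nN-\sum_j m_j$.
\item Hence for $\deg\phi\le n(N-1)$ only $m=0$ contributes to the coefficient of $x_1^{-1}\cdots x_n^{-1}$. So $\operatorname{Res}_f(\phi)$ is again the coefficient of $x_1^{N-1}\cdots x_n^{N-1}$ in $\phi$, and your degree count finishes the proof.
\end{itemize}

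That torus step is the ``nothing at infinity'' input, which your alternative route supplies directly. That route is the proof the paper relies on: it cites Griffiths--Harris instead of proving the lemma. Write $F_i,G$ for the homogenizations of $h_i,g$ and $\Omega=\sum_i(-1)^iZ_i\,dZ_0\wedge\cdots\wedge\widehat{dZ_i}\wedge\cdots\wedge dZ_n$. The form equals $Z_0^{\sum_i d_i-n-1-\deg g}\,G\,\Omega/(F_1\cdots F_n)$, so the degree bound excludes a pole along $Z_0=0$. The polar divisor is then $D_1+\dots+D_n$ with $D_i=\{F_i=0\}$. These divisors meet only at the affine zeros, and the global residue theorem gives the vanishing.

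Finally, in Step 1 plain B\'ezout does not rule out a positive-dimensional component of $\bigcap_i D_i$ lying inside the hyperplane at infinity. You need the refined B\'ezout inequality $\sum_Z\deg Z\le\prod_i d_i$, summed over the irreducible components $Z$ of $\bigcap_i D_i$, to conclude that the $H_i$ have no nontrivial common zero.
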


	We refer to Griffiths and Harris \cite[p.~671]{Griffiths1994} for a proof.

	In our application $h$ is the one defined in \eqref{eq:h}. It is the use of this result that requires us to virtually work over the complex numbers despite the fact that, ultimately, the zero set turns out to be real. Let us observe that most of the hypotheses already hold. Indeed, each $h_j$ is a quadratic polynomial. Therefore, all the $d_i=2$. Lemmata \ref{realpoints} and \ref{finitepoints} together imply that all the hypotheses are satisfied if we choose $g$ with degree at most $n-1$. Before we continue, it will be convenient to digress and compute the determinant involved at the extremal points (cf. Lemma \ref{quadraticeqs}).

	\begin{lemma}\label{Jacobianh}
		Let $u\in\mathscr{E}(P)$. Then
		$$
		\det J_h(u) = P(u)\det\left(I + \frac{1}{n}\sum_{j=1}^n\frac{v_j\otimes v_j}{\angles{v_j,u}^2}\right).
		$$
	\end{lemma}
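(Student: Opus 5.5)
The plan is a direct computation of the Jacobian of $h$, followed by a factorization that uses the extremal condition in the form \eqref{eq:hs}.

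\textbf{Step 1: the Jacobian in matrix form.} Let $V$ be the $n\times n$ matrix whose columns are $v_1,\dots,v_n$, and $W$ the matrix whose columns are $w_1,\dots,w_n$. Duality $\langle v_j,w_k\rangle=\delta_{jk}$ means $W^{T}V=I$, so $W^{T}=V^{-1}$. Differentiating \eqref{eq:h} term by term with the product rule gives
$$
J_h(x)=\sum_{j=1}^n v_j\,\bigl(\langle w_j,x\rangle v_j^{T}+\langle v_j,x\rangle w_j^{T}\bigr)=V D_w(x) V^{T}+V D_v(x) W^{T},
$$
where $D_v(x)=\mathrm{diag}(\langle v_j,x\rangle)$ and $D_w(x)=\mathrm{diag}(\langle w_j,x\rangle)$. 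This uses only the bilinear pairing on $\C^n$, so no conjugation issues arise.

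\textbf{Step 2: specialize to an extremal point.} Take $u\in\mathscr{E}(P)$. By Lemma~\ref{quadraticeqs} and \eqref{eq:hs}, $\langle w_j,u\rangle=1/(n\langle v_j,u\rangle)$ for every $j$, so $D_w(u)=\tfrac1n D_v(u)^{-1}$. The inverse exists because $P(u)\neq 0$, so no $\langle v_j,u\rangle$ vanishes. Writing $D=D_v(u)$ and substituting $W^{T}=V^{-1}$,
$$
J_h(u)=\tfrac1n VD^{-1}V^{T}+VDV^{-1}=VDV^{-1}\Bigl(I+\tfrac1n VD^{-2}V^{T}\Bigr).
$$

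\textbf{Step 3: take determinants.} First, $\det(VDV^{-1})=\det D=\prod_j\langle v_j,u\rangle=P(u)$. Second, $VD^{-2}V^{T}=\sum_j \frac{v_j\otimes v_j}{\langle v_j,u\rangle^2}$. Multiplying the two factors gives the claimed formula.

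The only point needing care is the index convention in $J_h$: whether $v_j\otimes v_j$ means $v_jv_j^{T}$, and in which order the rows and columns of the Jacobian are taken. These can only transpose some factors, and transposition changes neither determinant. So I expect no real obstacle; the key idea is simply factoring $VDV^{-1}$ out on the left.
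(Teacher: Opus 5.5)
Your proof is correct and is essentially the paper's argument in matrix notation. The paper also differentiates $h$, uses $n\langle w_j,u\rangle\langle v_j,u\rangle=1$, and factors $J_h(u)=\bigl(\sum_k\langle v_k,u\rangle\, v_k\otimes w_k\bigr)\bigl(I+\frac1n\sum_j \frac{v_j\otimes v_j}{\langle v_j,u\rangle^2}\bigr)$. Its left factor is exactly your $VDW^{T}=VDV^{-1}$, which is diagonal in the basis $v_1,\dots,v_n$ and so has determinant $P(u)$.
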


	\begin{proof}
		By straightforward differentiation of $h$ (cf. equation \eqref{eq:h}), we obtain
		\begin{align*}
			J_h(u) &= \sum_{j=1}^n ( \angles{v_j,u} v_j\otimes w_j+\angles{w_j,u} v_j\otimes v_j ).
		\end{align*}
		Using Lemma \ref{quadraticeqs} and recalling that $h(u)=0$ is equivalent to equations (\ref{eq:hs}) above, it follows that
		\begin{align*}
			J_h(u) &=\left(\sum_{k=1}^n \angles{v_k,u} v_k\otimes w_k\right)\left(I + \frac{1}{n}\sum_{j=1}^n\frac{1}{\angles{v_j,u}^2}v_j\otimes v_j\right),
		\end{align*}
		where we also used the fact that
		$$
		(v_k\otimes w_k)(v_j\otimes v_j) = \angles{w_k,v_j} v_k\otimes v_j = \delta_{jk} v_k\otimes v_j.
		$$
		On the other hand, the tensor
		$$
		\sum_{j=1}^n \angles{v_j,u} v_j\otimes w_j
		$$
		can be written in the basis $v_1,\dots,v_n$. Since it just scales the $j$-th coordinate of $u$ by a factor $\angles{v_j,u}$, it corresponds to a diagonal matrix whose determinant is $\prod_{j=1}^n \angles{v_j,u} = P(u)$, concluding the proof.

	\end{proof}

	By Lemma~\ref{quadraticeqs} and Lemma~\ref{Jacobianh}, the Euler-Jacobi vanishing theorem \ref{eulerjacobi} can be written as
	\begin{equation}\label{refref}
		\sum_{u\in\mathscr{E}(P)} \frac{g(u)}{P(u)}\mu(u) = 0,
	\end{equation}
	where $\mu:\mathscr{E}(P)\rightarrow(0,\infty)$ is defined by the identity
	$$
	\mu(u)=\det\left(I + \frac{1}{n}\sum_{j=1}^n\frac{v_j\otimes v_j}{\angles{v_j,u}^2}\right)^{-1}>0.
	$$
	(Recall the matrix is positive definite.) Now we take $g(x) = \Delta P(x)$. The polynomial $g(x)$ has degree $n-2$, which satisfies the hypothesis of Theorem \ref{eulerjacobi}. To conclude the proof, we notice that with this choice, by Lemma \ref{lem:phi} one has
	$$
	g(u) = \Delta P(u) = P(u)\left(n^2 - \sum_{j=1}^n\frac{1}{\angles{v_j,u}^2}\right).
	$$
	Our claim follows by plain substitution of this equality in \eqref{refref}.

% ============================================================
% Reduction to the basis case
% ============================================================
\section{Reduction to the basis case}\label{sec:reduction}

	In this section {\em we prove that if Theorem~\ref{main} holds for any basis in $\R^n$ then it also follows for any set of distinct unit vectors such that no pair is parallel}. We first notice that, without loss of generality, we may assume that $v_1,\dots,v_n\in\R^n$. Indeed, if the vectors belong to $\R^d$, depending on whether the dimension $d$ is smaller or larger than $n$, one can pad them with zeros, or replace the ambient space with $\operatorname{span}\{v_1,\cdots,v_n\}$, respectively.

	Let $V = \operatorname{span}\{v_1,\dots,v_n\}$ and let $\ell = \dim V$. Without loss of generality, we may assume that $v_1,\dots,v_\ell$ form a basis of $V$. Let $w_{\ell+1},\dots,w_n$ be an orthonormal basis of $V^\perp$. We introduce a smooth perturbation of the vectors in such a way that, for $t\neq 0$, the perturbed vectors form a basis and satisfy $v_j^t\to v_j$ as $t\to 0$. More precisely, for each $j=1,\dots,n$ and each $t\in(-\pi/2,\pi/2)$, we set
	$$
	v_j^t = \begin{cases}
		v_j, & j\leq \ell\\
		\cos(t) v_j + \sin(t) \, w_j, & j>\ell
	\end{cases}
	$$
	Then, clearly $t\mapsto v_j^t$ is a smooth function for all $j = 1,\dots,n$ and $v_1^t,\dots,v_n^t$ is a basis of $\R^n$ for all $t\in (-\pi/2,\pi/2)\setminus \{0\}$. It is natural to define the polynomial associated with this:
	$$
	P_t(x) = \prod_{j=1}^n\langle v_j^t,x\rangle
	$$
	and denote its set of local extremizers by $\mathscr{E}(P_t)\subseteq \Sp^{n-1}$. Note that by Lemma~\ref{finitepoints}, $\mathscr{E}(P_t)$ has exactly $2^n$ elements which can be described as the set of unit vectors satisfying
	$$
	u = \frac{1}{n}\sum_{j=1}^n\frac{v_j^t}{\langle v_j^t,u\rangle}.
	$$
	Define the function on $\R^n$
	$$
	F(x,t) = x - \frac{1}{n}\sum_{j=1}^n\frac{v_j^t}{\langle v_j^t,x\rangle},
	$$
	whenever $\langle v_j^t,x\rangle \neq 0$ for all $j = 1,\dots,n$. Notice that for every $u\in \mathscr{E}(P)$, we have that
	$$
	F(u,0) = 0.
	$$
	Moreover, $F$ is continuously differentiable in a neighbourhood of $(u,0)$ and its differential with respect to $x$ is
	$$
	dF_x(x,t) = I + \frac{1}{n}\sum_{j=1}^n\frac{v_j^t\otimes v_j^t}{\langle v_j^t,x\rangle^2}.
	$$
	At $(u,0)$, with $u\in\mathscr{E}(P)$, this differential is positive definite,
	hence invertible. By the implicit function theorem applied at $t=0$, for every $u \in \mathscr{E}(P)$, there exist neighbourhoods $U_u$ of $u$ and $I_u = (-\delta_u,\delta_u)$ of $t=0$, and a unique smooth function $t\mapsto x(u,t)$ defined on $I_u$, such that $x(u,0) = u$, and
	$$
	F(x(u,t),t) = 0.
	$$
	Additionally, this solution is unique among $x\in U_u$. Since $\mathscr{E}(P)$ is finite, the neighbourhood $U_u$ may be chosen to be pairwise disjoint and the constant $\delta_u>0$ may be chosen independent of $u$. In other words, there exists $\delta>0$ such that for every $u\in\mathscr{E}(P)$ and every $t\in[0,\delta)$, there is a unique $x(u,t)\in U_u\cap \mathscr{E}(P_t)$ with $x(u,0)=u$.

	Let us define the set
	$$
	\mathscr{B}_t = \{x(u,t): u\in \mathscr{E}(P)\}
	$$
	and its complement $\mathscr{M}_t = \mathscr{E}(P_t)\setminus \mathscr{B}_t$ for $t\in(0,\delta)$. Additionally, for all $t\in (0,\delta)$ and for all $u\in\mathscr{E}(P_t)$, let
	$$
	\mu_t(u) = \det\left(I + \frac{1}{n}\sum_{j=1}^n\frac{v_j^t\otimes v_j^t}{\langle v_j^t, u\rangle^2}\right)^{-1}.
	$$
	These will allow us to study the limit $t\rightarrow 0$ by splitting the sum involved into two, namely
	$$
	\sum_{u^t\in \mathscr{E}(P_t)}\left(\sum_{j=1}^n\frac{1}{\langle v_j^t, u^t\rangle^2}-n^2\right)\mu_{t}(u^t) = B_t + M_t,
	$$
	where
	$$
	B_t = \sum_{u^t\in \mathscr{B}_t}\left(\sum_{j=1}^n\frac{1}{\langle v_j^t, u^t\rangle^2}-n^2\right)\mu_t(u^t)
	$$
	and
	$$
	M_t = \sum_{u^t\in \mathscr{M}_t}\left(\sum_{j=1}^n\frac{1}{\langle v_j^t, u^t\rangle^2}-n^2\right)\mu_{t}(u^t).
	$$
	
	We claim the following auxiliary lemma.
	
	\begin{lemma}\label{limitsBM}
		Under the hypotheses of Theorem \ref{main}, the following limits hold:
		\begin{enumerate}[label=$(\alph*)$]
			\item \label{limitB}
			$
			\displaystyle\lim_{t\to 0}B_t
			=
			\sum_{u\in \mathscr{E}(P)}
			\left(
			\sum_{j=1}^n\frac{1}{\langle v_j,u\rangle^2}
			-n^2
			\right)\mu(u).
			$
			\item\label{limitM} $\displaystyle\lim_{t\to 0}M_t=0.$
		\end{enumerate}
	\end{lemma}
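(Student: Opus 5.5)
The plan is to treat the two limits separately: part \ref{limitB} by continuity along the branches $x(u,t)$, and part \ref{limitM} by showing that every point of $\mathscr{M}_t$ must run into at least two of the hyperplanes $\langle v_j,x\rangle=0$, where the determinant in $\mu_t$ beats the blow-up of $\sum_j\langle v_j^t,x\rangle^{-2}$.

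\emph{Part \ref{limitB}.} I argue term by term. The set $\mathscr{E}(P)$ is finite, and $\mathscr{B}_t$ is in bijection with it via $u\mapsto x(u,t)$. Each $x(u,t)$ is continuous in $t$ with $x(u,0)=u$, and $v_j^t\to v_j$. Since $\langle v_j,u\rangle\neq 0$ for every $u\in\mathscr{E}(P)$, the quantities $\langle v_j^t,x(u,t)\rangle$ stay bounded away from zero for small $t$. Hence both $\sum_j\langle v_j^t,x(u,t)\rangle^{-2}$ and the matrix $I+\frac1n\sum_j v_j^t\otimes v_j^t/\langle v_j^t,x(u,t)\rangle^2$ depend continuously on $t$. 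The determinant of this matrix is at least $1$, so $\mu_t(x(u,t))\to\mu(u)$. Summing over the finitely many $u$ gives the limit.

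\emph{Part \ref{limitM}, step one: escape to the hyperplanes.} Fix a sequence $t_k\to0$ and points $x_k\in\mathscr{M}_{t_k}$. Since $\mathscr{M}_t$ has at most $2^n$ elements, it suffices to show that each term tends to $0$ along a subsequence. Write $a_j=\langle v_j^{t_k},x_k\rangle$.

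By compactness of $\Sp^{n-1}$, pass to a limit $x_k\to y$. If all $\langle v_j,y\rangle\neq0$, then passing to the limit in the defining equation shows that $y$ solves \eqref{eq:critical}, so $y\in\mathscr{E}(P)$. Then $x_k\in U_y$ for large $k$, and the uniqueness in the implicit function theorem forces $x_k=x(y,t_k)\in\mathscr{B}_{t_k}$, a contradiction. Hence the set
$$
S=\{j:a_j\to0\}
$$
is nonempty after passing to a subsequence. Also, for $j\notin S$ we may assume $|a_j|\geq c>0$.

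\emph{Part \ref{limitM}, step two: $|S|\geq2$.} Suppose $S=\{s\}$. The equation reads $n x_k=\sum_j v_j^{t_k}/a_j$. Here $x_k$ is bounded and all terms with $j\neq s$ are bounded, but $\|v_s^{t_k}/a_s\|=1/|a_s|\to\infty$. This is a contradiction, so $|S|\geq2$.

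\emph{Part \ref{limitM}, step three: the determinant estimate.} Let $A=\frac1n\sum_j v_j^{t_k}\otimes v_j^{t_k}/a_j^2$, which is positive semidefinite. Then
$$
\det(I+A)\geq 1+\tr A+e_2(A),
$$
where $e_2(A)$ is the second elementary symmetric function of the eigenvalues. By Cauchy--Binet,
$$
e_2(A)\geq \frac{1}{n^2}\sum_{j<l}\frac{\|v_j^{t_k}\wedge v_l^{t_k}\|^2}{a_j^2a_l^2}.
$$
Because no two of the $v_j$ are parallel, there is $c'>0$ with $\|v_j^{t}\wedge v_l^{t}\|^2\geq c'$ for all small $t$ and all $j\neq l$. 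Now bound the numerator $\bigl|\sum_j a_j^{-2}-n^2\bigr|$ piece by piece:
\begin{itemize}[label=--]
\item For $j\notin S$ the terms are bounded, while $\det(I+A)\to\infty$, so they contribute $o(1)$ after multiplying by $\mu_{t_k}$.
\item For $j\in S$, pick any other $l\in S$. Then
$$
\frac{a_j^{-2}}{\det(I+A)}\leq \frac{n^2a_l^2}{c'}\to0.
$$
\end{itemize}
Therefore each term of $M_{t_k}$ tends to $0$, and so does $M_t$.

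The main obstacle I expect is the escape argument in step one. It relies on the uniqueness neighbourhoods $U_u$ to exclude limit points in $\mathscr{E}(P)$ that are not captured by $\mathscr{B}_t$, and on making the lower bound $\det(I+A)\geq 1+\tr A+e_2(A)$ quantitative uniformly in $t$.
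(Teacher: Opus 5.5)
Your proof is correct and follows essentially the paper's route. For (a) you use continuity along the implicit-function branches $x(u,t)$. For (b) you show that every limit point of $\mathscr{M}_t$ lies on at least two hyperplanes $v_j^\perp$, and then use the lower bound $\det(I+A)\ge 1+\tr A+e_2(A)$ with $e_2(A)=\frac{1}{n^2}\sum_{j<l}\sin^2\theta_{jl}/(a_j^2a_l^2)$. The only small deviations are that you get $|S|\ge 2$ directly from the unbounded term in the fixed-point equation, where the paper passes to the limit in $nP_t(u^t)u^t=\nabla P_t(u^t)$, and that your reduction from ``each term tends to $0$ along a subsequence'' to $M_t\to 0$ should be spelled out by the contradiction/relabelling argument the paper gives.
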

	These will be proved in the next subsection. For the time being let us observe that assuming Lemma \ref{limitsBM} we get
	$$
	\lim_{t\to 0} \sum_{u^t\in \mathscr{E}(P_t)}\left(\sum_{j=1}^n\frac{1}{\langle v_j^t, u^t\rangle^2}-n^2\right)\mu_{t}(u^t) = \lim_{t\to 0} (B_t + M_t) = \sum_{u\in \mathscr{E}(P)}\left(\sum_{j=1}^n\frac{1}{\langle v_j, u\rangle^2}-n^2\right)\mu(u),
	$$
	which is zero by the basis case of Theorem \ref{main}, as proved in Section \ref{sec:main} above.

% ------------------------------------------------------------
% Proof of Lemma \ref{limitsBM}
% ------------------------------------------------------------
\subsection{Proof of Lemma \ref{limitsBM}}

	For any $u^t\in \mathscr{B}_t$ we have that $u^t = x(u,t)$ for some $u\in \mathscr{E}(P)$. By continuity, one gets
	$$
	\left(\sum_{j=1}^n\frac{1}{\langle v_j^t, u^t\rangle^2}-n^2\right)\mu_t(u^t)\to \left(\sum_{j=1}^n\frac{1}{\langle v_j, u\rangle^2}-n^2\right)\mu(u).
	$$
	This proves Lemma~\ref{limitsBM}\ref{limitB}.

	Let us turn our attention to the local extrema on the points $u^t\in \mathscr{M}_t$. For these we prove the following:

	\begin{lemma}\label{lem:badpoints}
		Let $t_m\to 0$, and let $u^{t_m}\in \mathscr{M}_{t_m}$ be a convergent sequence. 
		Then, there exist distinct indices $j\neq k$ such that $\langle v_j^{t_m},u^{t_m}\rangle\to 0$ and $\langle v_k^{t_m},u^{t_m}\rangle\to 0.$
	\end{lemma}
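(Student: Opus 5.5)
Let $u^{t_m}\to u^*\in\Sp^{n-1}$; every element of $\mathscr{E}(P_t)$ is a unit vector by Lemma~\ref{criticalpoints}. We argue by contradiction, in two steps. First we show that at least one inner product $\langle v_j^{t_m},u^{t_m}\rangle$ tends to $0$. Then we show that it cannot be exactly one.

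\emph{Step 1: some inner product vanishes in the limit.} Suppose instead that $\langle v_j,u^*\rangle\neq 0$ for all $j$. Since $v_j^{t_m}\to v_j$, the denominators stay bounded away from zero. Passing to the limit in
$$
u^{t_m}=\frac1n\sum_{j}\frac{v_j^{t_m}}{\langle v_j^{t_m},u^{t_m}\rangle}
$$
gives $F(u^*,0)=0$, so $u^*\in\mathscr{E}(P)$. Because $F$ is $C^1$ near $(u^*,0)$ with invertible $x$-differential, the implicit function theorem gives local uniqueness. Hence for $m$ large, $u^{t_m}\in U_{u^*}$ forces $u^{t_m}=x(u^*,t_m)\in\mathscr{B}_{t_m}$. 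This contradicts $u^{t_m}\in\mathscr{M}_{t_m}$.

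\emph{Step 2: it cannot be exactly one.} Suppose exactly one index $k$ has $a_m:=\langle v_k^{t_m},u^{t_m}\rangle\to 0$, while $\langle v_j^{t_m},u^{t_m}\rangle\to \langle v_j,u^*\rangle\neq 0$ for $j\neq k$. Take the inner product of the critical equation with $u^{t_m}$ after isolating the $k$-th term:
$$
u^{t_m}-\frac1n\frac{v_k^{t_m}}{a_m}=\frac1n\sum_{j\neq k}\frac{v_j^{t_m}}{\langle v_j^{t_m},u^{t_m}\rangle}.
$$
The right-hand side stays bounded, while $\|v_k^{t_m}/a_m\|=1/|a_m|\to\infty$. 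This is already a contradiction: the left side has norm at least $\frac{1}{n|a_m|}-1$, which blows up.

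The whole argument then rests on the two denominator regimes. Either all denominators stay away from zero, which Step 1 excludes by implicit-function uniqueness. Or some denominator tends to zero, and then the unbounded term $v_k/(n a_m)$ must be cancelled by other unbounded terms. A single blowing-up term cannot be cancelled, since the remaining terms and $u$ are bounded. Hence at least two indices $j\neq k$ satisfy $\langle v_j^{t_m},u^{t_m}\rangle\to 0$ and $\langle v_k^{t_m},u^{t_m}\rangle\to 0$.

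The main obstacle is making Step 1 rigorous. We need the neighbourhoods $U_u$ used to define $\mathscr{B}_t$ to be exactly those on which the implicit function theorem gives uniqueness, and we need $u^{t_m}$ to eventually enter $U_{u^*}$; both follow from the construction with a fixed $\delta$ and from $u^{t_m}\to u^*$. Step 2 is the norm comparison, and there the only care needed is to keep the bounded terms uniformly bounded along the sequence.
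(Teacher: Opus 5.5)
Your proof is correct and follows essentially the paper's route. The case where no factor vanishes at the limit $u^*$ is excluded by implicit-function-theorem uniqueness exactly as in the paper, and your Step 2 is the uncleared-denominator version of the paper's argument: after passing to the limit in $nP_t(u^t)u^t=\nabla P_t(u^t)$, the condition $P(u)=0$ forces $\nabla P(u)=0$, which is impossible if only one linear factor vanishes (the mention of an inner product with $u^{t_m}$ in Step 2 is superfluous, since the norm comparison alone does the job).
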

	
	\begin{proof} Without loss of generality, we may assume that
		$$
		u^t \to u \in \Sp^{n-1}.
		$$
		We will prove that there exists $j\neq k$ such that $\angles{v_j,u} = \angles{v_k,u} = 0$.

		By construction, $u\notin\mathscr{E}(P)$. Indeed, if $u\in\mathscr{E}(P)$, then for sufficiently small $t>0$, $u^t\in U_u$. By the uniqueness given by the implicit function theorem, $u^t = x(u,t)\in \mathscr{B}_t$, contradicting the fact that $u^t\in \mathscr{M}_t$. Since $u^t\in\mathscr{M}_t\subseteq\mathscr{E}(P_t)$, it is a critical point of $P_t$ on $\Sp^{n-1}$
		$$
		nP_t(u^t) u^t=\nabla P_t(u^t) .
		$$
		Taking the limit as $t\to 0$ yields
		\begin{equation}\label{vanishinggrad}
			nP(u) u=\nabla P(u).
		\end{equation}
		If $P(u) \neq 0$, then
		$$
		u = \frac{1}{n}\sum_{j = 1}^n\frac{v_j}{\angles{v_j,u}},
		$$
		so $u\in\mathscr{E}(P)$, a contradiction. Since $P(u)=0$ necessarily, by \eqref{vanishinggrad}, it follows that $\nabla P(u)=0$. For a product of linear forms, this implies that at least two factors of the polynomial $P$ vanish at $u$, proving Lemma \ref{lem:badpoints}.
	\end{proof}
    
	This will allow us to show that, given a sequence $t_m\to 0$ and a convergent sequence
	$u^{t_m}\in \mathscr{M}_{t_m}$, the limit of the corresponding summand in
	Lemma \ref{limitsBM}\ref{limitM} vanishes. Indeed, by Lemma \ref{lem:lowerdet},
	equation \eqref{det:inq} below, we have
	$\mu_{t_m}(u^{t_m})\leq D_{t_m}^{-1}$, where
	$$
	D_{t_m} = 1 + \frac{1}{n}\sum_{j=1}^n \frac{1}{\langle v_j^{t_m},u^{t_m}\rangle^2} + \frac{1}{n^2}\sum_{1\leq j<k\leq n} \frac{\varepsilon_0}{\langle v_j^{t_m},u^{t_m}\rangle^2\langle v_k^{t_m},u^{t_m}\rangle^2},
	$$
	and  $\varepsilon_0>0$ is fixed and chosen so that
	$$
	\min_{1\leq j<k\leq n}\sin^2\theta^{t_m}_{jk}>\varepsilon_0>0
	$$
    for any $t_m$.
	This is possible because
	$\sin^2\theta^{t_m}_{jk}\to \sin^2\theta_{jk}>0$ for all $j\neq k$,
	using the hypothesis that no pair of limiting vectors is parallel.
	As a consequence,
	$$\abs{\sum_{j=1}^n\frac{1}{\langle v_j^{t_m},u^{t_m}\rangle^2} - n^2} \mu_{t_m}(u^{t_m}) \leq \sum_{j=1}^n \frac{1}{\langle v_j^{t_m},u^{t_m}\rangle^2D_{t_m}} +
	\frac{n^2}{D_{t_m}}.
	$$
	By Lemma~\ref{lem:badpoints}, there exist distinct indices $j\neq k$ such that
	$$
	\langle v_j^{t_m},u^{t_m}\rangle\to 0 \quad \text{and} \quad \langle v_k^{t_m},u^{t_m}\rangle\to 0.$$	
	Therefore,
	$$
	D_{t_m} \geq \frac{\varepsilon_0}{n^2 \langle v_j^{t_m},u^{t_m}\rangle^2 \langle v_k^{t_m},u^{t_m}\rangle^2} \to \infty,
	$$
	and the second summand above vanishes in the limit, i.e. $n^2/D_{t_m}\to 0$.
	
	Similarly we can control the sum that preceeds it. Fix $j = 1,\ldots n$. Notice that the limit $$\langle v_k^{t_m},u^{t_m}\rangle\to 0$$  implies that
	$$
	\frac{1}
	{\langle v_j^{t_m},u^{t_m}\rangle^2D_{t_m}}
	\leq
	\frac{n^2}{\varepsilon_0}
	\langle v_k^{t_m},u^{t_m}\rangle^2
	\to 0
	$$
	also vanishes in the limit. Since $j$ was arbitrary, we obtain
	$$
	\sum_{j=1}^n
	\frac{1}
	{\langle v_j^{t_m},u^{t_m}\rangle^2D_{t_m}}
	\to 0.
	$$
	Consequently,
	$$
	\abs{\sum_{j=1}^n \frac{1}{\langle v_j^{t_m},u^{t_m}\rangle^2} - n^2}\mu_{t_m}(u^{t_m})\to 0.
	$$
	Thus, along a convergent subsequence, the corresponding
	summand in Lemma \ref{limitsBM}\ref{limitM} vanishes.

	We now conclude that $M_t\to 0$. Suppose, by contradiction, that this is not
	the case. Then there exist $\eta>0$ and a sequence $t_m\to 0$ such that
	$$
	\abs{M_{t_m}}\geq \eta
	$$
	for every $m$. Since $\#\mathscr{E}(P_{t_m})=2^n$, the number of points in
	$\mathscr{M}_{t_m}$ is uniformly bounded. Passing to a subsequence, we may
	label the points of $\mathscr{M}_{t_m}$ as
	$$
	u^{t_m}_1,\ldots,u^{t_m}_N,
	$$
	with $N\leq 2^n$ independent of $m$, in such a way that each sequence $u^{t_m}_r$ converges on $\mathbb{S}^{n-1}$. By the above argument, for
	each fixed $r=1,\ldots,N$, the corresponding summand tends to zero. Since
	there are only finitely many such summands, we obtain
	$$
	M_{t_m}\to0,
	$$
	contradicting $\abs{M_{t_m}}\geq\eta$. Hence $M_t\to 0$, proving Lemma~\ref{limitsBM}\ref{limitM}. It remains only to prove the determinant estimate used above.

	\begin{lemma}\label{lem:lowerdet}
		Let $v_1,\dots,v_n \in \R^n$ be unit vectors, and let $\theta_{jk}$ be the angles between $v_j$ and $v_k$. Let $u\in \Sp^{n-1}$ be such that $\angles{v_j,u}\neq 0$ for all $j$. Then
		\begin{equation}\label{det:inq}
			\det\left(I + \frac{1}{n}\sum_{j=1}^n\frac{v_j\otimes v_j}{\angles{v_j,u}^2}\right)\geq 1 + \frac{1}{n}\sum_{j=1}^n\frac{1}{\angles{v_j,u}^2} + \frac{1}{n^2}\sum_{1\leq j<k\leq n}\frac{\sin^2\theta_{jk}}{\angles{v_j,u}^2\angles{v_k,u}^2}
		\end{equation}
		holds.
	\end{lemma}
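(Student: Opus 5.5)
Set $a_j = \frac{1}{\sqrt{n}\,\angles{v_j,u}}$ and $x_j = a_j v_j$. The matrix in question is then
$$
I+\sum_{j=1}^n x_j\otimes x_j = I + XX^T,
$$
where $X$ is the $n\times n$ matrix with columns $x_1,\dots,x_n$. The plan is to expand this determinant exactly via Cauchy--Binet and then discard the terms of order $\geq 3$, all of which are nonnegative.

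\textbf{Step 1 (Cauchy--Binet).} For a positive semidefinite matrix of the form $XX^T$, the characteristic-polynomial expansion gives
$$
\det(I+XX^T)=\sum_{S\subseteq\{1,\dots,n\}}\det\bigl(G_S\bigr),
$$
where $G_S=(\angles{x_j,x_k})_{j,k\in S}$ is the Gram matrix of the vectors $\{x_j\}_{j\in S}$, and the empty set contributes $1$. The cleanest route is the Sylvester identity $\det(I+XX^T)=\det(I+X^TX)$. Expanding $\det(I+G)$ with $G=X^TX$ as the sum of its principal minors (the coefficients of the characteristic polynomial evaluated at $-1$) yields exactly the sum above.

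\textbf{Step 2 (nonnegativity).} Each principal minor of a positive semidefinite Gram matrix is nonnegative, so every term with $|S|\geq 3$ can be dropped, giving an inequality. It remains to compute the terms with $|S|\leq 2$:
\begin{itemize}
\item $S=\emptyset$ contributes $1$.
\item $S=\{j\}$ contributes $\norm{x_j}^2=a_j^2=\frac{1}{n\angles{v_j,u}^2}$, using that $v_j$ is a unit vector.
\item $S=\{j,k\}$ contributes
$$
a_j^2a_k^2\bigl(1-\angles{v_j,v_k}^2\bigr)=\frac{\sin^2\theta_{jk}}{n^2\angles{v_j,u}^2\angles{v_k,u}^2}.
$$
\end{itemize}
Summing these gives precisely the right-hand side of \eqref{det:inq}.

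\textbf{Main obstacle.} The only delicate point is justifying the principal-minor expansion $\det(I+G)=\sum_S\det G_S$. This is standard: expand $\det$ multilinearly in the columns of $I+G$, splitting each column into its $I$-part and its $G$-part. Alternatively, one can argue by eigenvalues, writing $\det(I+G)=\prod_i(1+\lambda_i)$ and identifying the elementary symmetric polynomials of the $\lambda_i$ with sums of principal minors. Either way the computation is routine.

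The hypothesis $u\in\Sp^{n-1}$ is not actually used; only $\angles{v_j,u}\neq0$ is needed, so that the $a_j$ are defined.
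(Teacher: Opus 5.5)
Your proof is correct and is essentially the paper's argument: both expand $\det(I+A)=\sum_{k}e_k$, where $A=XX^T$, and discard the nonnegative terms of order $k\geq 3$. The only difference is bookkeeping. The paper works with the eigenvalues of $A$ and computes $e_1=\tr(A)$ and $e_2=\tfrac12\bigl(\tr(A)^2-\tr(A^2)\bigr)$, whereas you read off the same quantities as sums of principal minors of the Gram matrix $X^TX$, so that each dropped term is nonnegative as a Gram determinant rather than as a product of eigenvalues.
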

	\begin{proof}
		Set
		$$
		a_j = \frac{1}{\sqrt{n}}\frac{v_j}{\angles{v_j,u}},
		$$
		for $j = 1,\dots,n$ and define
		$$
		A = \sum_{j=1}^n a_j\otimes a_j.
		$$
		Since each $a_j\otimes a_j$ is positive semidefinite, the matrix $A$ is positive semidefinite. Hence, its eigenvalues $\lambda_1,\dots,\lambda_n$ are all nonnegative.

		We expand
		$$
		\det\left(I + A\right) = \prod_{j = 1}^n(1+\lambda_j) = 1 + \sum_r \lambda_r + \sum_{r<s} \lambda_r\lambda_s + R
		$$
		where $R$ is the sum of all terms involving products of at least three eigenvalues. Since all $\lambda_r\geq 0$, every term in $R$ is nonnegative, and so is $R$.

		We now compute the first-order term. Using that $v_j$ is a unit vector for all $j=1,\ldots,n$, we get
		$$
		\sum_{r=1}^n \lambda_r = \tr(A) = \sum_{j=1}^n \norm{a_j}^2 = \frac{1}{n}\sum_{j=1}^n\frac{1}{\angles{v_j,u}^2}.
		$$
		Next we compute the second-order term. We get
		$$
		\sum_{1\leq r < s\leq n} \lambda_r\lambda_s = \frac{1}{2}\left[\left(\sum_{r=1}^n \lambda_r\right)^2 - \sum_{r=1}^n \lambda_r^2\right]= \frac{\tr(A)^2 - \tr(A^2)}{2}.
		$$
		Now
		$$
		\tr(A)^2 = \sum_{j,k=1}^n\norm{a_j}^2 \norm{a_k}^2.
		$$
		On the other hand, using
		$$
		A^2 = \sum_{j,k=1}^n \angles{a_j,a_k} a_j\otimes a_k
		$$
		we get
		$$
		\tr(A^2) = \sum_{j,k=1}^n \angles{a_j,a_k}^2.
		$$
		Thus, if $\theta_{jk}$ is the angle between $a_j$ and $a_k$,
		$$
		\tr(A)^2 - \tr(A^2) = \sum_{j,k=1}^n \left(\norm{a_j}^2 \norm{a_k}^2-\angles{a_j,a_k}^2\right) = \sum_{j,k=1}^n \norm{a_j}^2 \norm{a_k}^2\sin^2\theta_{jk}.
		$$
		Finally, by the definition of $a_j$, we obtain
		$$
		\norm{a_j}^2 \norm{a_k}^2\sin^2\theta_{jk} = \frac{1}{n^2}\frac{\sin^2\theta_{jk}}{\angles{v_j,u}^2\angles{v_k,u}^2}.
		$$
		Combining all our previous identities and taking into account that $R\geq 0$ gives the desired inequality.
	\end{proof}
	
% ============================================================
% Proof of Theorem~\ref{weakextremal}
% ============================================================
\section{Proof of Theorem~\ref{weakextremal}}\label{sec:weakextremal}

	First, we see that the factors of $P$ in an extremal configuration must be simple (cf. Lemma \ref{extremalreduction} from the Appendix). Suppose that $v_1,\dots,v_n\in \Sp^{d-1}$ is an extremal configuration of Conjecture~\ref{weak}. By Lemma~\ref{extremalreduction}, no two of the $v_j$'s are parallel. For all $u\in \Sp^{d-1}$ with $P(u)\neq 0$, the Arithmetic--Geometric Mean inequality gives
	\begin{equation}\label{AMGM}
		n = \inf_{v\in \Sp^{d-1}}{\abs{P(v)}^{-\frac{2}{n}}} \leq \abs{P(u)}^{-\frac{2}{n}} = \left(\prod_{j=1}^n\frac{1}{\angles{v_j,u}^2}\right)^{\frac{1}{n}} \leq \frac{1}{n}\sum_{j=1}^n\frac{1}{\angles{v_j,u}^2}
	\end{equation}
	for all $u\in \Sp^{d-1}$ with $P(u)\neq0$. This implies that for all $u\in \Sp^{d-1}$ with $P(u)\neq0$, we have that
	$$
	\sum_{j=1}^n\frac{1}{\angles{v_j,u}^2}-n^2 \geq 0.
	$$
	Therefore, by Theorem~\ref{main}, for all $u\in\mathscr{E}(P)$ the identity
	$$
	\sum_{j=1}^n\frac{1}{\angles{v_j,u}^2}-n^2 = 0
	$$
	necessarily holds. Hence, for $u\in\mathscr{E}(P)$ equality holds throughout the chain of inequalities in \eqref{AMGM}. In particular, equality holds in the Arithmetic-Geometric Mean inequality above, which implies that
	$$
	\angles{v_1,u}^2 = \cdots = \angles{v_n,u}^2 = \frac{1}{n}.
	$$
	As a consequence, substituting in \eqref{eq:critical} yields, for all $u\in \mathscr{E}(P)$,
	\begin{equation}\label{bangtype}
		u = \frac{1}{\sqrt{n}}\sum_{j = 1}^n \varepsilon_j v_j
	\end{equation}
	for some $\varepsilon \in\{-1,1\}^n$. Hence, there exists a subset $S\subset\{-1,1\}^n$ such that
	$$
	\mathscr{E}(P) = \left\{u_\varepsilon = \frac{1}{\sqrt{n}}\sum_{j = 1}^n \varepsilon_j v_j: \varepsilon\in S\right\}.
	$$
	We now show that $v_1,\dots,v_n$ are orthogonal. Taking the inner product with $v_k$ in \eqref{bangtype} yields
	$$
	\frac{\varepsilon_k}{\sqrt{n}} = \angles{v_k,u} = \frac{1}{\sqrt{n}}\sum_{j = 1}^n \varepsilon_j \angles{v_k,v_j}.
	$$
	Equivalently, for every $\varepsilon\in S$ we have
	$$
	G\varepsilon = \varepsilon,
	$$
	where $G = (\angles{v_j,v_k})_{j,k=1}^n$ is the Gram matrix of $v_1,\dots,v_n$. Thus, if we show that the elements $\varepsilon\in S$ span $\R^n$, then $G = I$, and hence $v_1,\dots,v_n$ are orthogonal. Once this is proven, it follows from the orthogonality that $d\geq n$.

	To this end, fix $j = 1,\ldots,n$. There exists $x\in v_j^\perp$ such that $x\notin v_k^\perp$ for any $k\neq j$. Indeed, since no two of the vectors $v_k$ are parallel, the hyperplanes $v_k^\perp$ are pairwise distinct. Therefore, the hyperplane $v_j^\perp$ is not contained in the union of the remaining hyperplanes. This proves the existence of such an $x$. As a consequence, in a neighbourhood of $x$ there are two adjacent connected components $C$ and $C'$, separated by the hyperplane $v_j^\perp$. On each connected component, the sign vector
	$$
	(\operatorname{sign}\angles{v_1,u},\dots, \operatorname{sign}\angles{v_n,u})
	$$
	is constant (cf. Lemma \ref{finitepoints}). Let $\varepsilon$ and $\varepsilon'$ be the sign vectors of $C$ and $C'$, respectively. By continuity, in a sufficiently small neighbourhood of $x$, the signs of $\angles{v_k,u}$ remain constant for all $k\neq j$. However, crossing the hyperplane $v_j^\perp$ changes the sign of $\angles{v_j,u}$. In other words, $\varepsilon'_j = -\varepsilon_j$, while $\varepsilon'_k = \varepsilon_k$ for every $k\neq j$. By Lemma \ref{finitepoints}, each of these connected components contains a local extremum $u\in \mathscr{E}(P)$. This shows that $\varepsilon',\varepsilon\in S$ and, furthermore,
	$$
	e_j = \frac{\varepsilon-\varepsilon'}{2\varepsilon_j}\in \operatorname{span}(S).
	$$
	This holds for $j = 1,\dots,n$, proving that all the standard basis vectors $e_1,\dots,e_n$ belong to $\operatorname{span}(S)$. Consequently, $\operatorname{span}(S) = \R^n$, finishing the proof.

% ============================================================
% Proof of Proposition~\ref{strongextremal}
% ============================================================
\section{Proof of Proposition~\ref{strongextremal}}\label{sec:strongextremal}

	To prove extremality, we show that
	$$
	\sum_{j=1}^n\frac{1}{\angles{v_j,u}^2}\geq n^2
	$$
	for all $u\in \Sp^{d-1}$ with $P(u)\neq 0$.
	Indeed, by our hypothesis and Lemma \ref{lem:L=0} we know that $P$ is harmonic. The identity $\Delta P\equiv 0$ together with the identity \eqref{Laplacian} from Lemma \ref{lem:grad} shows that
	$$
	\sum_{j=1}^n\frac{1}{\angles{v_j,u}^2} = \norm{\sum_{j=1}^n\frac{v_j}{\langle v_j,u\rangle}}^2
	$$
	holds for all $u\in \Sp^{d-1}$ with $P(u)\neq 0$. Applying Cauchy--Schwarz, one gets
	$$
	n^2 = \angles{u,\sum_{j=1}^n\frac{v_j}{\langle v_j,u\rangle}}^2\leq \norm{\sum_{j=1}^n\frac{v_j}{\langle v_j,u\rangle}}^2\norm{u}^2 = \norm{\sum_{j=1}^n\frac{v_j}{\langle v_j,u\rangle}}^2,
	$$
	with equality precisely when $u\in\mathscr{E}(P)$.

% ============================================================
% Appendix
% ============================================================
\section{Appendix}

	In this section we summarize a number of observations including straightforward, yet subtle, identities satisfied by $P$ and its critical points. The following observation, which was certainly known to Ball and Frenkel, explains the elementary link between the stronger statement and the original polarization problem.

	\begin{proposition}\label{lem:AGM}
		Conjecture~\ref{strong} implies Conjecture~\ref{weak}.
	\end{proposition}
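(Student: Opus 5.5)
The plan is to apply the arithmetic--geometric mean inequality at the point $u$ supplied by Conjecture~\ref{strong}. The statement says that Conjecture~\ref{strong} implies the inequality part of Conjecture~\ref{weak}; as the paper notes, the characterization of extremal cases is handled separately in Theorem~\ref{weakextremal}.

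Fix unit vectors $v_1,\dots,v_n\in\Sp^{d-1}$ and take $u\in\Sp^{d-1}$ with
$$
\sum_{j=1}^n\frac{1}{\langle v_j,u\rangle^2}\leq n^2 .
$$
Implicit in this is that every $\langle v_j,u\rangle\neq 0$, since otherwise the left-hand side is $+\infty$. Hence $P(u)\neq 0$, and the numbers $\langle v_j,u\rangle^{-2}$ are positive.

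Applying AM--GM to these $n$ positive numbers gives
$$
\left(\prod_{j=1}^n\frac{1}{\langle v_j,u\rangle^2}\right)^{1/n}\leq \frac1n\sum_{j=1}^n\frac{1}{\langle v_j,u\rangle^2}\leq n .
$$
Raising both sides to the power $n/2$ and inverting yields
$$
\prod_{j=1}^n|\langle v_j,u\rangle|\geq n^{-n/2}.
$$
This is exactly the inequality claimed in Conjecture~\ref{weak}, attained at the same $u$.

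The argument is mechanical; the only point needing care is the nonvanishing of the denominators, which makes AM--GM applicable. If the proposition is read as also covering the ``moreover'' clause of Conjecture~\ref{weak}, one further observation is needed: for an orthonormal set with $d\geq n$, the value $n^{-n/2}$ is attained and not exceeded. Both directions follow by AM--GM applied to the coordinates $\langle v_j,x\rangle^2$ of a unit vector $x$, using $\sum_j\langle v_j,x\rangle^2\leq 1$ by Bessel's inequality. The converse direction, that extremal configurations must be orthonormal, is the content of Theorem~\ref{weakextremal}, not of this implication.
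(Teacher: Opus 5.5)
Your proposal is correct and is the paper's own argument: apply AM--GM to the positive numbers $\langle v_j,u\rangle^{-2}$ at the point $u$ supplied by Conjecture~\ref{strong}, which gives $|P(u)|^{-2/n}\le n$, i.e.\ $\prod_j|\langle v_j,u\rangle|\ge n^{-n/2}$. Your remarks on the ``moreover'' clause are also correct and agree with the paper. Orthonormal sets attain $n^{-n/2}$, by Bessel's inequality plus AM--GM. The converse characterization is not a consequence of Conjecture~\ref{strong} and is handled separately in Theorem~\ref{weakextremal}.
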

	\begin{proof}
		By the Arithmetic-Geometric Mean inequality, for all $x\in \R^d$ with $P(x)\neq0$, we obtain
		\begin{equation}
			\abs{P(x)}^{-\frac{2}{n}} = \left(\prod_{j=1}^n\frac{1}{\angles{v_j,x}^2}\right)^{\frac{1}{n}} \leq \frac{1}{n}\sum_{j=1}^n\frac{1}{\angles{v_j,x}^2}.
		\end{equation}
		The hypothesis then implies that there exists $u\in \Sp^{d-1}$ such that $\abs{P(u)}^{-\frac{2}{n}}\leq n$ which is equivalent to Conjecture~\ref{weak}.
	\end{proof}

	The following is known in the literature (cf. \cite{Steinberg1964} and \cite[Theorem~6.1]{Agranovsky2000}); nevertheless, we include a proof here for completeness.

	\begin{lemma}\label{lem:L=0}
		Given a finite reflection system as in Proposition~\ref{strongextremal}, the associated polynomial $P$ is harmonic.
	\end{lemma}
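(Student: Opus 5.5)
We show that $P(x)=\prod_{j=1}^n\langle v_j,x\rangle$ is harmonic in two steps: first, $\Delta P$ transforms under the reflection group exactly as $P$ does, and second, no nonzero polynomial of degree lower than $\deg P$ can transform that way.

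Let $W$ be the group generated by the reflections $s_{v_j}$, $j=1,\dots,n$. Since $\Phi$ is finite and spans $\operatorname{span}\Phi$, while $W$ acts trivially on $(\operatorname{span}\Phi)^\perp$, the group $W$ is finite.

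\textbf{Step 1: $P$ is anti-invariant.} For a fixed reflection $s=s_{v_k}$, the map $s$ permutes $\Phi=\{\pm v_1,\dots,\pm v_n\}$. Hence there is a permutation $\sigma$ and signs $\eta_j$ with $s(v_j)=\eta_j v_{\sigma(j)}$. Since $s$ is an orthogonal involution,
$$
P(sx)=\prod_j\langle v_j,sx\rangle=\prod_j\langle s v_j,x\rangle=\Big(\prod_j\eta_j\Big)P(x).
$$
Thus $P\circ s=\chi(s)P$ with $\chi(s)=\pm1$.

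We then determine the sign. The factor $\langle v_k,x\rangle$ changes sign under $s$, since $s v_k=-v_k$. Evaluating at a generic point of $v_k^\perp$ does not help, because $P$ vanishes there. Instead, write $P=\langle v_k,x\rangle Q(x)$. Restrict to the line $x=y+tv_k$ with $y\in v_k^\perp$ generic, so that $Q(y)\neq0$ by non-parallelism. Then $P(y+tv_k)=tQ(y)+O(t^2)$ and $P(s(y+tv_k))=P(y-tv_k)=-tQ(y)+O(t^2)$. Hence $\chi(s)=-1$, so $P\circ s=-P$ for every generating reflection.

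\textbf{Step 2: $\Delta P$ is anti-invariant.} The Laplacian commutes with orthogonal maps, so $\Delta P\circ s=\Delta(P\circ s)=-\Delta P$ for each $s=s_{v_k}$.

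\textbf{Step 3: anti-invariant polynomials are divisible by $P$.} Let $f$ be any polynomial with $f\circ s_{v_k}=-f$ for all $k$. On the hyperplane $v_k^\perp$ the reflection fixes every point, so $f=-f$ there and $f$ vanishes on $v_k^\perp$. Since $\langle v_k,x\rangle$ is irreducible, it divides $f$. The linear forms $\langle v_k,x\rangle$, $k=1,\dots,n$, are pairwise non-associate because no two $v_j$ are parallel. By unique factorization in $\R[x]$, their product $P$ divides $f$.

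\textbf{Step 4: conclusion.} Apply Step 3 to $f=\Delta P$. This polynomial is homogeneous of degree $n-2<n$ (or zero), yet divisible by $P$, which has degree $n$. Hence $\Delta P\equiv0$.

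The only delicate point is the sign computation in Step 1. One must check that each reflection in the system acts by exactly $-1$ on $P$, and not by $+1$ through some cancellation of sign flips among the other factors. The argument along the transversal line $y+tv_k$ settles this cleanly. Steps 2–4 are routine.
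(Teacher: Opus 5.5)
Your proof is correct and takes essentially the same route as the paper. You fix the sign in $P\circ s_{v_k}=\pm P$ by expanding along the transversal line $y+tv_k$; the paper differentiates $f(-t)=\varepsilon f(t)$ at $t=0$, which is the same computation. The rest matches too: $\Delta$ commutes with $s_{v_k}$, so $\Delta P$ vanishes on each mirror, is therefore divisible by $P$, and must vanish by degree count (your preliminary remark that $W$ is finite is never used and can be dropped).
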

	\begin{proof}
		Let $v_1,\dots,v_n\in \Sp^{d-1}$ be unit vectors, no two of which are parallel, and suppose that $\Phi=\{\pm v_1,\dots,\pm v_n\}$ is a finite reflection system. For each $k$, the reflection $s_{v_k}$ preserves the reflection system $\Phi$. Hence $P\circ s_{v_k}$ has the same set of linear factors as $P$, possibly up to a sign. Therefore, there exists some $\varepsilon\in\{-1,1\}$ such that
		$$
		P\circ s_{v_k}=\varepsilon P.
		$$
		We claim that $\varepsilon=-1$. Choose $x\in v_k^\perp$ with $\angles{v_j,x}\neq 0$ for all $j\neq k$. Define
		$$
		f(t)= P(x + tv_k),
		$$
		for all $t\in \R$. Since $x\in v_k^\perp$ and $\norm{v_k}=1$, we have $\angles{v_k , x + t v_k}=t.$ Thus
		$$
		f(t)=t\prod_{j\neq k}(\angles{v_j,x} + t\angles{v_j,v_k}).
		$$
		On the other hand, since $s_{v_k}(x+t v_k)=x-t v_k$, the identity $P\circ s_{v_k}=\varepsilon P$ implies that $f(-t)=\varepsilon f(t)$. Differentiating with respect to $t$, we obtain $-f'(-t)=\varepsilon f'(t).$ Substituting $t=0$ yields $-f'(0)=\varepsilon f'(0).$ Since, by our choice of $x$, we know that
		$$
		f'(0)=\prod_{j\neq k}\angles{v_j,x}\neq 0.
		$$
		It follows that $\varepsilon=-1$.

		To conclude, we take any $x\in v_k^\perp$. Then
		$$
		\Delta P(x) = \Delta P(s_{v_k}(x)) = \Delta (P\circ s_{v_k})(x)= - \Delta P(x).
		$$
		This implies that $\Delta P(x)=0$ for all $x\in v_k^\perp$. It is elementary to show, using a rotation and a representation of the polynomial $\Delta P$ if necessary, that the linear form $\langle v_k,x\rangle$ divides $\Delta P(x)$ for every $k=1,\dots,n$. Since the factors are coprime, it follows that $P$ divides $\Delta P$. However, $\Delta P$ has degree $n-2$, whereas $P$ has degree $n$. Hence this is only possible if $\Delta P\equiv 0$.
	\end{proof}

	\begin{lemma} \label{lem:grad}
		For any $x\in \R^d$ with $P(x)\neq 0$, the gradient of $P$ is
		\begin{equation}\label{gradP}
			\nabla P(x) = P(x)\sum_{j=1}^n\frac{v_j}{\angles{v_j,x}},
		\end{equation}
		and the Laplacian is
		\begin{equation}\label{Laplacian}
			\Delta P(x) = P(x)\left(\norm{\sum_{j=1}^n\frac{v_j}{\langle v_j,x\rangle}}^2-\sum_{j=1}^n\frac{1}{\langle v_j,x\rangle^2}\right).
		\end{equation}
	\end{lemma}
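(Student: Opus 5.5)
The plan is a direct computation with the product rule, working on the open set where $P(x)\neq 0$. Write $\ell_j(x)=\angles{v_j,x}$, so that $P=\prod_j \ell_j$, $\nabla \ell_j = v_j$ and $\Delta \ell_j=0$. Since $P\neq 0$ there, we may use logarithmic differentiation: $\nabla \log|P| = \sum_j \nabla \ell_j/\ell_j = \sum_j v_j/\ell_j$. Multiplying by $P$ gives \eqref{gradP}. One can also check this by expanding $\partial_k P=\sum_j v_{jk}\prod_{i\neq j}\ell_i$ and factoring out $P$.

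For the Laplacian, we use $\Delta P = \operatorname{div}(\nabla P)$ together with \eqref{gradP}, written as $\nabla P = P\,W$ with $W(x)=\sum_j v_j/\ell_j(x)$. The product rule gives
$$
\Delta P = \angles{\nabla P, W} + P\,\operatorname{div} W = P\norm{W}^2 + P\operatorname{div}W .
$$
Next we compute $\operatorname{div}(v_j/\ell_j)=\sum_k v_{jk}\,\partial_k(1/\ell_j) = -\sum_k v_{jk}^2/\ell_j^2 = -\norm{v_j}^2/\ell_j^2$. Using that each $v_j$ is a unit vector, this equals $-1/\ell_j^2$. Summing over $j$ yields \eqref{Laplacian}.

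There is no real obstacle. The only points to watch are the use of $\norm{v_j}=1$ in the divergence term (without it, the second sum would carry weights $\norm{v_j}^2$), and the requirement that every $\ell_j$ be nonzero, which is what $P(x)\neq 0$ guarantees, so that the rational expressions make sense. As a cross-check, expanding $\Delta P$ directly as $\sum_{i\neq j}\angles{v_i,v_j}\prod_{k\neq i,j}\ell_k$ and factoring out $P$ gives $P\sum_{i\neq j}\angles{v_i,v_j}/(\ell_i\ell_j)$. This matches $\norm{W}^2-\sum_j 1/\ell_j^2$ because the diagonal terms $i=j$ of $\norm{W}^2$ are exactly $\sum_j 1/\ell_j^2$.
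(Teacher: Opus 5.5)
Your proof is correct and follows essentially the paper's route: the product rule gives the gradient, and then $\nabla P = P\,W$ is differentiated once more. The paper writes out the full Hessian $P\bigl(W\otimes W-\sum_j v_j\otimes v_j/\langle v_j,x\rangle^2\bigr)$ and takes its trace, which is exactly your divergence computation, including the use of $\norm{v_j}=1$.
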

	\begin{proof}
		To compute the gradient take $x\in \C^d$ with $P(x) \neq 0$. By the product rule,
		\begin{equation*}
			\nabla P(x) = \sum_{j=1}^{n}\prod_{k\neq j}\angles{v_k,x}v_j
			= \sum_{j=1}^{n}\frac{P(x)}{\angles{v_j,x}}v_j
			= P(x)\sum_{j=1}^{n}\frac{v_j}{\angles{v_j,x}}.
		\end{equation*}
		Here, in the second and third equalities we use that $\angles{v_j,x}\neq 0$ for all $j$. To compute the Laplacian we first differentiate both sides of (\ref{gradP}) and apply the product rule again
		\begin{align*}
			\nabla^2P(x) &=  \sum_{j=1}^n\frac{v_j}{\angles{v_j,x}}\otimes \nabla P(x) - P(x) \sum_{j=1}^{n}\frac{v_j\otimes v_j}{\angles{v_j,x}^2}\\
			&= P(x) \left(\sum_{j=1}^n\frac{v_j}{\angles{v_j,x}}\otimes\sum_{j=1}^n\frac{v_j}{\angles{v_j,x}} - \sum_{j=1}^{n}\frac{v_j\otimes v_j}{\angles{v_j,x}^2}\right).
		\end{align*}
		The trace of the above yields,
		\begin{equation*}
			\Delta P(x) = P(x)\left(\norm{\sum_{j=1}^n\frac{v_j}{\langle v_j,x\rangle}}^2-\sum_{j=1}^n\frac{1}{\langle v_j,x\rangle^2}\right).
		\end{equation*}
	\end{proof}

	   The following observation characterizes the extremal points. It follows from a straightforward application of Lagrange multipliers and, to the best of our knowledge, first appeared as Lemma 13 in \cite{Revesz2004}.

	\begin{lemma}\label{criticalpoints}
		Consider $P(x)$ as a polynomial on $\R^d$. A point $u$ belongs to $\mathscr{E}(P)$ if and only if
		\begin{equation}\label{CPequ}
			u = \frac{1}{n}\sum_{j = 1}^n \frac{v_j}{\langle v_j,u\rangle}.
		\end{equation}
		Furthermore, any vector $u$ satisfying this identity is a unit vector.
	\end{lemma}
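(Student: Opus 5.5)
Since $\mathscr{E}(P)$ consists of the points $u\in\Sp^{d-1}$ with $P(u)\neq 0$ and $\nabla P(u)$ proportional to $u$, the natural route is Lagrange multipliers together with the gradient formula \eqref{gradP} of Lemma~\ref{lem:grad}.

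First, suppose $u\in\mathscr{E}(P)$, so $\nabla P(u)=\lambda u$ for some $\lambda\in\R$ and $P(u)\neq 0$. By \eqref{gradP},
$$
P(u)\sum_{j=1}^n\frac{v_j}{\langle v_j,u\rangle}=\lambda u .
$$
To determine $\lambda$, take the inner product with $u$ and use $\norm{u}=1$. The left side becomes $P(u)\sum_{j}\langle v_j,u\rangle/\langle v_j,u\rangle=nP(u)$, so $\lambda=nP(u)$. This is just Euler's identity $\langle\nabla P(x),x\rangle=nP(x)$ for the homogeneous polynomial $P$ of degree $n$. Dividing by $nP(u)\neq 0$ gives \eqref{CPequ}.

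Conversely, suppose $u$ satisfies \eqref{CPequ}. Then implicitly every $\langle v_j,u\rangle\neq0$, so $P(u)\neq 0$. Before anything else we show that $u$ is a unit vector, which also gives the final assertion. Taking the inner product of \eqref{CPequ} with $u$ gives
$$
\norm{u}^2=\frac1n\sum_{j=1}^n\frac{\langle v_j,u\rangle}{\langle v_j,u\rangle}=1 .
$$
Hence $u\in\Sp^{d-1}$. Multiplying \eqref{CPequ} by $nP(u)$ and using \eqref{gradP} yields $\nabla P(u)=nP(u)\,u$, so $\nabla P(u)$ is proportional to $u$ and $u\in\mathscr{E}(P)$.

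There is no real obstacle here. The only point needing care is that membership in $\mathscr{E}(P)$ presupposes $u\in\Sp^{d-1}$, while \eqref{CPequ} a priori allows any $u\in\R^d$. The unit-norm computation above closes this gap, and it is the same pairing-with-$u$ trick used in the forward direction. The identical computation also works for complex $u$, with the bilinear form $\langle\cdot,\cdot\rangle$, giving $\langle u,u\rangle=1$; this is the fact used in Lemma~\ref{realpoints}.
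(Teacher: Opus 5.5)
Your proof is correct and is essentially the paper's argument: impose the Lagrange condition, substitute the gradient formula \eqref{gradP}, and pair with $u$ to identify the multiplier. The only cosmetic difference is that you write $\nabla P(u)=\lambda u$ and obtain $\lambda=nP(u)$, whereas the paper writes $u=\lambda\nabla P(u)$ and obtains $\lambda=1/(nP(u))$. Your explicit check that any solution of \eqref{CPequ} satisfies $\norm{u}^2=1$ is a small improvement, since the paper's proof uses this only implicitly when it writes $1=\norm{u}^2$.
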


	\begin{proof} It is evident that no denominator can vanish. As a consequence, $P(u)\neq 0$.
		A vector $u\in \mathbb{S}^{d-1}$ is a critical point of $P$ on the sphere if and only if $u$ is proportional to the gradient of $P$. If that is the case, there exists $\lambda\in\R$ such
		that
		$$
		u = \lambda \nabla P(u).
		$$
		Using the formula for the gradient (\ref{gradP}), it follows that
		\begin{equation}\label{cplemma:eq1}
			u = \lambda P(u) \sum_{j=1}^n\frac{v_j}{\angles{v_j,u}}.
		\end{equation}
		The reverse implication is even easier as \eqref{cplemma:eq1} already implies $u$ and $\nabla P(u)$ are homothetic by the same reasoning.
		Finally, taking the inner product with $u$ on both sides of \eqref{cplemma:eq1} yields
		$$
		1 = \Vert u\Vert^2 = \lambda P(u) n
		$$ and so $\lambda = \frac{1}{P(u)n}$. Substituting the value of $\lambda$ back in (\ref{cplemma:eq1}), we obtain the desired identity.
	\end{proof}

	\begin{lemma}\label{lem:phi}
		For any $u\in \mathscr{E}(P)$, the Laplacian satisfies the identity
		\begin{equation}\label{LaplacianCP}
			\Delta P(u) = P(u)\left(n^2-\sum_{j=1}^n\frac{1}{\angles{v_j,u}^2}\right).
		\end{equation}
	\end{lemma}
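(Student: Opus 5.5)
The identity should follow by combining the Laplacian formula \eqref{Laplacian} of Lemma~\ref{lem:grad} with the characterization of local extrema in Lemma~\ref{criticalpoints}. Fix $u\in\mathscr{E}(P)$. Then $P(u)\neq 0$, so none of the denominators $\angles{v_j,u}$ vanish and \eqref{Laplacian} applies at $x=u$:
$$
\Delta P(u) = P(u)\left(\norm{\sum_{j=1}^n\frac{v_j}{\langle v_j,u\rangle}}^2-\sum_{j=1}^n\frac{1}{\langle v_j,u\rangle^2}\right).
$$
It therefore remains only to evaluate the squared norm appearing in the first term.

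For this we use \eqref{CPequ}. Since $u\in\mathscr{E}(P)$, Lemma~\ref{criticalpoints} gives
$$
\sum_{j=1}^n \frac{v_j}{\langle v_j,u\rangle}=nu,
$$
and $u$ is a unit vector. Hence the squared norm equals $n^2\norm{u}^2=n^2$. Substituting this into the display above yields \eqref{LaplacianCP}.

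No step here should present a real obstacle; the only points to check are bookkeeping. The Laplacian formula was derived for $x$ with $P(x)\neq0$, and this is guaranteed by the definition of $\mathscr{E}(P)$. The normalization $\norm{u}=1$ is supplied by Lemma~\ref{criticalpoints} itself. If one wishes to use the identity at the complex solutions that appear in the Euler--Jacobi argument, the same computation goes through formally with the bilinear pairing $\angles{\cdot,\cdot}$, because $\angles{u,u}=1$ follows from pairing \eqref{CPequ} with $u$. In any case, Lemma~\ref{realpoints} shows that these solutions are real.
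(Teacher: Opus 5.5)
Your proof is correct and is the same as the paper's: substitute $\sum_{j} v_j/\angles{v_j,u} = nu$ from \eqref{CPequ} into the Laplacian formula \eqref{Laplacian}, and use $\norm{u}=1$ to get $n^2$. Your remark about the bilinear pairing at complex solutions is a harmless addition, since Lemma~\ref{realpoints} already makes all solutions real.
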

	\begin{proof}
		Suppose that $u\in \mathscr{E}(P)$. It follows from (\ref{CPequ}) that
		$$
		\norm{\sum_{j=1}^n\frac{v_j}{\langle v_j,u\rangle}}^2 = \Vert nu\Vert^2 = n^2.
		$$
		The desired formula follows now by a substitution of the above in (\ref{Laplacian}).
	\end{proof}

	\begin{lemma}\label{extremalreduction}
		An extremal configuration of Conjecture~\ref{weak} must have a square-free associated polynomial $P$.
	\end{lemma}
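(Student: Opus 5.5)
We need to show: if $v_1,\dots,v_n\in\Sp^{d-1}$ is extremal, i.e. $\sup_{u}|P(u)|=n^{-n/2}$, then no two of the $v_j$ are parallel. Since the $v_j$ are unit vectors, parallel means $v_j=\pm v_k$. In that case the linear form $\langle v_j,x\rangle^2$ divides $P$ up to sign, which is exactly the failure of square-freeness. The plan is to argue by contradiction, and the idea is to reduce to a smaller configuration and use the polarization bound already available for fewer vectors.

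The tool is the bound for $m<n$ unit vectors that are pairwise non-parallel. Theorem~\ref{main} together with Proposition~\ref{lem:AGM} gives the real polarization bound in that case: there is $u$ with $\prod|\langle v,u\rangle|\ge m^{-m/2}$. For arbitrary configurations, possibly with repetitions, I would deduce the same bound by a perturbation and compactness argument. Perturb the vectors slightly so that none are parallel, apply the bound, and pass to the limit. This works because $\sup_u|P(u)|$ depends continuously on the $v_j$. So the bound holds for all configurations.

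Now suppose $v_1=\pm v_2$, and let $Q(x)=\langle v_1,x\rangle^2\prod_{j\ge 3}\langle v_j,x\rangle$. Regroup $P$ as a product of a power of a single form with other forms. The cleaner route is a weighted polarization inequality. Choose a point $u$ that maximizes $R(x)=\prod_{j\ge2}|\langle v_j,x\rangle|$ over the sphere, with $R(u)\ge (n-1)^{-(n-1)/2}$, and then compare. A direct comparison is lossy, so instead I would work with weights. Take $u$ maximizing $|P|$ and apply Lagrange multipliers as in Lemma~\ref{criticalpoints}. At a local extremum with repeated factors, the critical equation still reads $u=\frac1n\sum_j v_j/\langle v_j,u\rangle$, where the vector $v_1$ now appears twice. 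Taking the inner product with $u$ gives $\sum_j 1=n$, and the Laplacian identity of Lemma~\ref{lem:phi} remains valid.

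The argument then runs as in Section~\ref{sec:weakextremal}. The AM--GM chain \eqref{AMGM} forces $\sum_j\langle v_j,u\rangle^{-2}\ge n^2$ at every point with $P(u)\neq0$. At the maximizer $u$, the second-order condition for a maximum of $\log|P|$ on the sphere is that the tangential Hessian is negative semidefinite. I would trace this condition to obtain $\sum_j\langle v_j,u\rangle^{-2}\le n^2$, strictly less when a factor repeats. Concretely, taking the Hessian of $\log|P|-\frac n2\log\|x\|^2$ and tracing it over $u^\perp$ gives
\[
-\sum_j \frac{1-\langle v_j,u\rangle^2/\,\cdot\,}{\langle v_j,u\rangle^2}\cdots\le0 .
\]
I expect this bookkeeping to be the main obstacle: I have to make sure the trace inequality actually yields $\sum\langle v_j,u\rangle^{-2}\le n^2$, so that equality is forced. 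Once equality holds, the AM--GM equality case gives $\langle v_j,u\rangle^2=1/n$ for all $j$. Substituting into the critical equation gives $u=\frac1{\sqrt n}\sum\varepsilon_jv_j$ with $v_1$ repeated. Equality in the Hessian trace means $\nabla^2$ vanishes on $u^\perp$ in the relevant directions. I would test it along the direction $v_1-\langle v_1,u\rangle u$, which lies in $u^\perp$ and is nonzero when $n\ge2$. A repeated factor makes the second variation along this direction strictly negative beyond the bound, which gives the contradiction.

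If the Hessian route proves too messy, the fallback is the symmetrization trick. Replace the pair $v_1,v_1$ by the two unit vectors $(v_1\pm\delta e)/\|\cdot\|$ with $e\perp v_1$. To first order in $\delta$, the quantity $\sup|P|$ changes by an amount whose sign I would compute at the maximizer. Showing that it strictly increases would contradict the universal bound for the perturbed configuration.
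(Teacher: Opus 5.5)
There is a genuine gap: your main route cannot give the inequality it needs, and your fallback has the right idea but the wrong direction and no uniform estimate.

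\textbf{The Hessian route.} Let $f=\log\abs{P}-\frac n2\log\norm{x}^2$, let $u\in\Sp^{d-1}$ with $P(u)\neq0$, and let $\Pi$ be the orthogonal projection onto $u^\perp$. The Hessian of $f$ at $u$, compressed to $u^\perp$, is
\[
-\sum_{j=1}^n\frac{\Pi v_j\otimes\Pi v_j}{\angles{v_j,u}^2}-n\,\Pi .
\]
This is negative definite at \emph{every} such point, not only at the maximizer; indeed each extremum is the maximum of $\abs{P}$ on its cell. Its trace is $-\sum_j\angles{v_j,u}^{-2}-n(d-2)$. So the second-order condition only says $\sum_j\angles{v_j,u}^{-2}\geq -n(d-2)$, which is vacuous. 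No bookkeeping turns it into $\sum_j\angles{v_j,u}^{-2}\le n^2$. Likewise, a negative second variation along $v_1-\angles{v_1,u}u$ is exactly what a maximum should have, so it gives no contradiction.

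An upper bound $\sum_j\angles{v_j,u}^{-2}\le n^2$ at some extremum is precisely the content of Theorem~\ref{main}. That theorem needs Euler--Jacobi and the hypothesis that no two $v_j$ are parallel, and it is false for repeated vectors. For $v_1=v_2$ in $\R^2$ the only extrema are $\pm v_1$, where $\sum_j\angles{v_j,u}^{-2}-n^2=-2$. So ``the argument runs as in Section~\ref{sec:weakextremal}'' is not available here. That section invokes the present lemma exactly so that Theorem~\ref{main} applies, so relying on it would be circular.

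\textbf{The fallback.} This is essentially the paper's proof, but as written it points the wrong way. The universal bound is a \emph{lower} bound, $\sup\abs{P}\geq n^{-n/2}$. A perturbation that increases $\sup\abs{P}$ contradicts nothing. You must show that splitting the double factor strictly \emph{decreases} the supremum below $n^{-n/2}$.

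A first-order computation at the maximizer cannot show this. Take a unit $w\perp v_1$ and $v_{\pm\theta}=\cos\theta\,v_1\pm\sin\theta\,w$. Then
\[
\angles{v_\theta,x}\angles{v_{-\theta},x}=\cos^2\theta\,\angles{v_1,x}^2-\sin^2\theta\,\angles{w,x}^2,
\]
which is even in $\theta$, so the first-order variation vanishes. In any case, information at one maximizer does not control a supremum over the sphere.

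The missing step is the uniform bound $\abs{a-b}\leq\max\{a,b\}$ for $a,b\geq0$. Write $P=\angles{v_1,x}^2Q$ and use $\abs{Q(u)}\le1$ and $\abs{\angles{w,u}}\le1$. This gives $\abs{P_\theta(u)}\leq\max\{\cos^2\theta\,\abs{P(u)},\sin^2\theta\}$ for all $u\in\Sp^{d-1}$. Hence $\sup\abs{P_\theta}<n^{-n/2}$ for small $\theta>0$. Together with your continuity argument extending the bound to all configurations, this contradicts $\sup\abs{P_\theta}\geq n^{-n/2}$ directly. You then do not need the iteration the paper uses to reach a square-free configuration.
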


	\begin{proof} We proceed by contradiction. Suppose that the factor $\angles{v_1,x}$ appears with multiplicity greater than or equal to two in $P$. Then,
		$$
		P(x) = \angles{v_1,x}^2 Q(x),
		$$
		where the polynomial $Q$ is the product of the rest of the factors in $P$. Take a unit vector $w \in v_1^\perp$ and define
		$$
		v_\theta = \cos\theta\,v_1 + \sin\theta\, w,
		$$
		for $\theta\in (-\pi/2,\pi/2)$. Note that if $\pi/2>\theta>0$ is sufficiently small, neither $v_{\theta}$ nor $v_{-\theta}$ can be parallel to any of the original vectors.
		On the other hand, the associated polynomial
		$$
		P_\theta(x) = \angles{v_\theta,x} \angles{v_{-\theta},x} Q(x) = \left(\cos^2\theta \angles{v_1,x}^2 - \sin^2\theta \angles{w,x}^2\right)Q(x).
		$$
		Using that $|\langle w,u\rangle|\leq 1$ and $|Q(u)|\leq 1$, it is straightforward to infer the inequality
		$$
		\abs{P_\theta(u)} \leq \max\{\cos^2\theta |P(u)|,\sin^2\theta\}.
		$$
		Taking $\theta$ small enough and taking into account that $|\cos(\theta)|< 1$ one gets
		$$
		\sup_{u\in\Sp^{d-1}}{\abs{P_\theta(u)}}<\sup_{u\in\Sp^{d-1}}{\abs{P(u)}}.
		$$
		Applying the same procedure iteratively we produce a polynomial $P_\theta$ with no double factor and satisfying the same strict inequality. This contradicts the extremality of the vectors defining the polynomial $P$. 
	\end{proof}

% ============================================================
% Acknowledgements
% ============================================================
\section*{Acknowledgements}

	The second author is deeply grateful to Professor Keith M. Ball for introducing him to this problem, and for his guidance and many helpful conversations during his PhD studies at the University of Warwick. The second author also thanks Gergely Ambrus, Leo Brauner, Daniel Galicer, and Damian Pinasco for valuable discussions. 
    
    The authors are indebted to Professor Keith M. Ball for several comments and suggestions that helped improve the presentation of the paper. They are also grateful to Ujué Etayo for her encouragement, and to Yufei Zhao for pointing out how to fix a minor gap in the proof of Lemma \ref{extremalreduction}.
	
	During the research and writing process leading to the completion of this paper, the authors made use of a large language model. These interactions led to a number of interesting computational insights and minor edits. The paper includes a figure drawn with the aid of a Python program entirely written by artificial intelligence.

	\bibliographystyle{abbrv}
	\bibliography{references}
	\vspace{.5em}

	\noindent
	\begin{minipage}[t]{0.48\textwidth}
		\noindent
		Ángel D. Martínez\\
		\textsc{Departamento de Matemáticas}\\
		\textsc{CUNEF Universidad}\\
		\textsc{Madrid, Spain}\\
		\textit{Email:} \texttt{angeld.martinez@cunef.edu}
	\end{minipage}
	\hfill
	\begin{minipage}[t]{0.48\textwidth}
		\noindent
		Oscar Ortega-Moreno\\
		\textsc{Departamento de Matemáticas}\\
		\textsc{CUNEF Universidad}\\
		\textsc{Madrid, Spain}\\
		\textit{Email:} \texttt{oscar.ortegamoreno@cunef.edu}
	\end{minipage}

\end{document}